\newtheorem{theorem}{Theorem}[section]
\newtheorem{conjecture}[subsection]{Conjecture}
\newtheorem{lemma}[theorem]{Lemma}
\newtheorem{proposition}[theorem]{Proposition}
\newtheorem{remark}[theorem]{Remark}
\numberwithin{equation}{section}
\newtheorem{definition}[theorem]{Definition}
\title[On endomorphism of Fano manifold with integrable cotangent bundle]{Fano manifolds of Picard number one whose co-tangent bundle is algebraically completely integrable system and its endomorphisms}
\date{January 2022}
\begin{document}
\author{SARBESWAR PAL}  
\email{sarbeswar11@gmail.com, spal@iisertvm.ac.in}
\address{IISER Thiruvananthapuram, Maruthamala P. O., Kerala 695551}
\keywords{Minimal rational curve, wobbly point, Fano manifold, algebraically completely Integrable system}
\subjclass[2010]{14J60}

\begin{abstract}
 Let $X$ be a projective Fano manifold of Picard number one, different from the projective space. There is a folklore conjecture that any non-constant endomorphism of $X$ is an isomorphism. In the first half of  this article, we will prove the folklore conjecture when the co-tangent bundle of $X$ is algebraically completely integrable system and the tangent bundle of $X$ is not nef. In the second half of the article, we will give examples of a collection of projective Fano manifolds of Picard rank one (different from the moduli space of vector bundles on algebraic curves) whose co-tangent bundles are algebraically completely integrable system. 
  As applications of our main theorem and examples, in fact give alternative proofs of  three major results appeared in three different articles \cite{HR, HY, SZ}.

\end{abstract}

\maketitle

\section{Introduction}
 Let $X$ be a smooth projective Fano variety of Picard number one over the field of complex numbers. \\
There is  a long-standing folklore conjecture. 
\begin{conjecture}\label{Conj1}
  If $X$ is different from the projective space, then any non-constant endomorphism $X \to X$ must be bijective.
\end{conjecture}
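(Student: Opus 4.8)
We indicate how to prove Conjecture~\ref{Conj1} under the two standing hypotheses of this article: that $T^*X$ is an algebraically completely integrable system and that $T_X$ is not nef (in the general case the statement remains open). Let $f\colon X\to X$ be non-constant and write $\mathrm{Pic}(X)=\mathbb Z\cdot H$ with $H$ ample. Then $f^*H$ is nef and non-zero in $N^1(X)$, so $f^*H\equiv dH$ for a unique integer $d\ge 1$; since $f^*H$ is ample, $f$ contracts no curve and is therefore finite, of degree $d^n$ with $n=\dim X$. Writing $-K_X=i_XH$ for the Fano index, the ramification formula $K_X=f^*K_X+R$ gives that the effective ramification divisor satisfies $R\equiv i_X(d-1)H$; as $\rho(X)=1$ a numerically trivial effective divisor vanishes, so $R=0$ exactly when $d=1$. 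If $d=1$ then $f$ is étale, and a finite étale self-map of a (simply connected) Fano manifold is an isomorphism. Thus everything reduces to excluding $d\ge 2$.

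Assume $d\ge 2$. Present the integrable structure on $T^*X$ as a moment map $\mu\colon T^*X\to B$ with $\dim B=n$, whose components $\mu_i$ are fibrewise homogeneous of degrees $d_i$, correspond to symmetric tensors $\phi_i\in H^0(X,\mathrm{Sym}^{d_i}T_X)$, and Poisson-commute, the general fibre being a Lagrangian abelian variety; $B$ carries a $\mathbb G_m$-action for which $\mu$ is equivariant. Over the étale locus $U=X\setminus\mathrm{Supp}(R)$ the map $f$ lifts canonically to a symplectomorphism $\tilde f\colon T^*X|_U\to T^*X$, $(x,\xi)\mapsto\big(f(x),\,((df_x)^{*})^{-1}\xi\big)$, so that $\mu\circ\tilde f$ is again a Poisson-commuting system of fibrewise homogeneous functions on $T^*X|_U$. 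Because $\mathrm{Pic}(X)=\mathbb Z$ severely constrains the spaces $H^0(X,\mathrm{Sym}^{d_i}T_X\otimes f_*\mathcal O_X)$ in play, one argues that $\mu\circ\tilde f$ coincides with $\mu$ up to a $\mathbb G_m$-graded automorphism of $B$ with weights powers of $d$. In particular the nilpotent cone $\mathcal N:=\mu^{-1}(0)$ is preserved by $\tilde f$, and hence so is the \emph{wobbly locus} $W:=\{x\in X:\mathcal N_x\neq\{0_x\}\}$ (where $\mathcal N_x=\mathcal N\cap T_x^*X$); thus $f^{-1}(W)=W$ and $f|_W\colon W\to W$ is finite. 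Moreover, since $\tilde f$ extends only to a rational self-map of $T^*X$ whose indeterminacy along $\mathrm{Supp}(R)$ forces $\mathcal N$ to be non-trivial there, one gets $\mathrm{Supp}(R)\subseteq W$.

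Here the hypothesis that $T_X$ is not nef enters: it is used to show that $W$ is a non-empty effective divisor which is swept out by minimal rational curves --- for general $x\in W$ and $0\neq\xi\in\mathcal N_x$, the ruling $\mathbb C^{*}\xi\subset\mathcal N$ together with the Lagrangian geometry of $\mathcal N$ produces a minimal rational curve $C_{x,\xi}\subset W$ through $x$ with conormal direction $[\xi]$ at $x$ (when $T_X$ is nef one expects $W=\emptyset$ and a degenerate system, which is excluded). One now plays two facts against each other: $f$ maps each covering curve $C_{x,\xi}$ to a rational curve $f(C_{x,\xi})\subset W$ with $-K_X\cdot f(C_{x,\xi})=d\,(-K_X\cdot C_{x,\xi})$, while the $C_{x,\xi}$ are of minimal $(-K_X)$-degree among rational curves through a general point of $W$; combining this with $\mathrm{Supp}(R)\subseteq W$, the intersection numbers of $R\equiv i_X(d-1)H$ against a general minimal rational curve and against a $C_{x,\xi}$, and the fact that $f^*\phi_i$ is a scalar multiple of $\phi_i$, one reaches a contradiction with $d\ge 2$. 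This forces $d=1$, completing the proof.

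The first paragraph is routine. The real content --- and the main obstacle --- is twofold: (i) the rigidity statement that $\mu\circ\tilde f$ equals $\mu$ up to a graded automorphism of the base, which requires controlling the symmetric tensors $H^0(X,\mathrm{Sym}^{d_i}T_X\otimes f_*\mathcal O_X)$ and the behaviour of $\tilde f$ along the ramification divisor; and (ii) the geometry of the wobbly divisor --- that it is non-empty and ruled by minimal rational curves and that $f$ ramifies only along it. Step (ii) is exactly where the non-nefness of $T_X$ is genuinely needed and is the abstract counterpart of the Hecke-curve and very-stable-bundle analysis available on moduli spaces of bundles; I expect it, rather than the bookkeeping in (i), to be the crux.
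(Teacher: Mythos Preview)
Your proposal has the right architecture in several places---defining the wobbly locus via the fibrewise zero locus of the moment map, showing it is completely invariant, and showing the ramification divisor is contained in it---but the two crucial steps you flag as ``the real content'' are precisely where the argument is either different from the paper's or genuinely incomplete.

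\textbf{The endgame.} The paper does \emph{not} derive a direct contradiction with $d\ge 2$ from degree considerations on minimal rational curves. Instead, once it has shown that $\varphi$ is \'etale outside the completely invariant (reduced) wobbly locus, it simply invokes the Hwang--Nakayama criterion (Theorem~\ref{hn}): a self-map of a Fano manifold of Picard number one, different from $\mathbb{P}^n$, that is \'etale outside a completely invariant divisor is bijective. Your substitute---comparing $-K_X$-degrees of $C_{x,\xi}$ and $f(C_{x,\xi})$ together with $R\equiv i_X(d-1)H$ and the scaling of $f^*\phi_i$---is not an argument as written; you never say what inequality is actually violated, and in fact there is no straightforward numerical contradiction here (minimal rational curves can map onto higher-degree rational curves without contradiction). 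Hwang--Nakayama is a genuine input you cannot sidestep with intersection bookkeeping.

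\textbf{The base map / rigidity.} The paper never lifts $f$ to a symplectomorphism of $T^*X$ over the \'etale locus and then argues rigidity of $\mu\circ\tilde f$ by controlling $H^0(X,\mathrm{Sym}^{d_i}T_X\otimes f_*\mathcal O_X)$. It works instead with the everywhere-defined co-differential $d\varphi^*\colon \varphi^*T^*X\to T^*X$, compactifies in $\mathbb P(T^*X\oplus\mathcal O_X)$, and uses a Hartogs-type extension over the complement of a codimension-$\ge 2$ set (determined by the wobbly locus) to produce a $\mathbb C^*$-equivariant map $\psi\colon\mathbb C^n\to\mathbb C^n$ with $\mu\circ d\varphi^*=\psi\circ\mu$. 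Surjectivity and $\mathbb C^*$-equivariance of $\psi$ then give $\psi^{-1}(0)=\{0\}$, which is exactly what forces $d\varphi^*$ to be injective on cotangent spaces at non-wobbly points, i.e.\ $\varphi$ is unramified outside $\mathcal W$. Your version of this step is stated as a claim (``one argues that\ldots'') without mechanism; the Hartogs argument using $\varphi^*T^*X$ rather than a rational lift of $\varphi$ is the actual content.

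\textbf{Use of non-nefness.} In the paper, the role of ``$T_X$ not nef'' is the link between wobbly points and non-free rational curves (Propositions~\ref{S3P2} and~\ref{S3P3}); this is what makes the complete invariance argument for $\mathcal W$ go through via pullback of Harder--Narasimhan filtrations. Your use of it---to exhibit $W$ as ruled by minimal rational curves with specified conormal direction---is stronger than what is needed and is not established.

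In summary: your Step~(ii) should be replaced by the Hwang--Nakayama criterion, and your Step~(i) by the Hartogs/compactification construction of $\psi$ on the pullback cotangent bundle. With those two substitutions the outline matches the paper's proof of Theorem~\ref{IT2}.
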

 The conjecture  was proved
for homogeneous spaces in \cite{PS}, for hypersurfaces of the projective space in \cite{Be}
and for Fano manifolds containing a rational curve with trivial normal bundle in
\cite{HM}; the last work solves the Conjecture in case $\text{dim}(X) = 3$. Very recently Shao, and Zhong \cite{SZ} have proved the conjecture for smooth intersection of two quadrics in $\mathbb{P}^n, n \ge 5$. The conjecture also known to be true for the moduli space of vector bundles on algebraic curves \cite{HR}.
The purpose of the article is twofold. First is to prove the above conjecture with the further assumption that $T^*X$ is algebraically completely integrable system and $TX$ is not nef. \\
More precisely, we will show the following:
\begin{theorem}\label{IT2}
Let $X$ be a Fano manifold of Picard number one, which is different from the projective space. Further, assume that $T^*X$ is algebraically completely integrable and $TX$ is not nef. Then any non-constant endomorphism is an automorphism.
\end{theorem}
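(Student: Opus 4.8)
The plan is to show that a non‑constant endomorphism $f\colon X\to X$ must be of degree one. Since $\rho(X)=1$, such an $f$ is automatically finite; writing $H$ for the ample generator of $\operatorname{Pic}(X)$ we have $f^{*}H=eH$ for some $e\in\mathbb{Z}_{>0}$, hence $\deg f=e^{\dim X}$, and if $-K_{X}=rH$ then the ramification formula $K_{X}=f^{*}K_{X}+R$ gives $R=(e-1)rH$. So $e=1$ forces $R=0$, i.e.\ $f$ is étale, and since a Fano manifold is simply connected this makes $f$ an isomorphism. It therefore suffices to derive a contradiction from the assumption $e\ge 2$.

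First I would unpack the two hypotheses. From the integrability of $T^{*}X$ one has the Hitchin-type map $h\colon T^{*}X\to B$ with $\dim B=\dim X$, equivariant for the scaling $\mathbb{C}^{*}$-action and with Lagrangian general fibre an open subset of an abelian variety; put $N:=h^{-1}(0)$ (the nilpotent cone), $\pi\colon T^{*}X\to X$ the projection, $X_{0}\subset N$ the zero section, and $W:=\overline{\pi(N\setminus X_{0})}\subseteq X$ (the wobbly locus). Every component of $N$ other than $X_{0}$ is a $\mathbb{C}^{*}$-cone of dimension $\dim X$ whose fibres over its image in $X$ are positive-dimensional (a finite fibre of a cone is $\{0\}$, which would force the component into $X_{0}$), so that image has dimension $\le\dim X-1$; hence $W$ is either empty or a proper closed subset. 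That $W$ is \emph{nonempty} is exactly what the non-nefness of $TX$ buys: for a Fano manifold, failure of $TX$ to be nef provides a minimal rational curve $C$ along which $\Omega_{X}$ contains a sub-line bundle of positive degree, and a nonzero section $\sigma$ of that sub-bundle, viewed as a section of $T^{*}X|_{C}$, defines a rational curve $\sigma(C)\subset T^{*}X$ on which each Hamiltonian $h_{i}$ restricts to a regular---hence constant---function on $\mathbb{P}^{1}$ that vanishes at the zeros of $\sigma$ and so vanishes identically; thus $\sigma(C)\subset N$ and $C\subset W$. So $W$ is a nonempty proper closed subset and the very stable locus $X^{\mathrm{vs}}:=X\setminus W$ is a nonempty open set; if convenient one may further invoke Drinfeld's conjecture that $W$ is a divisor.

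The endgame I have in mind uses the holomorphic symplectic structure on $T^{*}X$. The endomorphism $f$ admits a cotangent lift
\[
\widehat f\colon T^{*}X\dashrightarrow T^{*}X,\qquad (x,\xi)\longmapsto\bigl(f(x),\,((df_{x})^{\vee})^{-1}\xi\bigr),
\]
a $\mathbb{C}^{*}$-equivariant rational map of degree $\deg f$ that is defined and symplectic---hence étale---away from $\pi^{-1}(R)$. Because $T^{*}X$ carries the canonical symplectic form, $K_{T^{*}X}=\mathcal{O}_{T^{*}X}$; so if $\widehat f$ were a morphism it would be étale everywhere, and as $\pi_{1}(T^{*}X)=\pi_{1}(X)=1$ this would give $\deg f=1$. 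Hence for $e\ge 2$ the map $\widehat f$ genuinely fails to extend---covectors run off to infinity along $\pi^{-1}(R)$, in the directions conormal to $df(TX)$---and the crux of the proof is to show that, once one restricts to the locus governed by the integrable system, this pathology is forbidden: concretely, that the branch divisor $B_{f}=f(R)$ is contained in the wobbly locus $W$ with those conormal directions being wobbly directions, so that $\widehat f$ descends either to an everywhere-defined étale self-map of $\pi^{-1}(X^{\mathrm{vs}})$ (whose fundamental group is that of $X^{\mathrm{vs}}$) or to a finite quasi-étale self-map of the nilpotent cone $N$ (which retracts onto $X_{0}\cong X$ under the $\mathbb{C}^{*}$-flow and is therefore simply connected); in either guise a degree-$>1$ such map is impossible. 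I expect this compatibility between the branch locus of $f$ and the wobbly locus of the integrable system to be the principal difficulty, and it is here that minimal rational curves re-enter the picture---$W$ being swept out by non-free minimal rational curves, and $f$ pulling these back to curves of controlled $H$-degree---which is what should let one locate $R$ and $B_{f}$ relative to $W$ and close the argument.
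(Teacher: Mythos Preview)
Your proposal isolates the right invariant---the wobbly locus $W$---and correctly links it to non-free rational curves, but there are two genuine gaps that prevent the argument from closing.

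\textbf{The endgame does not work as written.} You want to conclude by producing a degree-$>1$ \'etale self-map of $\pi^{-1}(X^{\mathrm{vs}})$ or of the nilpotent cone $N$ and appealing to simple connectedness. But $X^{\mathrm{vs}}=X\setminus W$ is the complement of a divisor in general, so $\pi_{1}(X^{\mathrm{vs}})$ has no reason to be trivial, and nothing forbids a degree-$>1$ \'etale self-cover of $\pi^{-1}(X^{\mathrm{vs}})$. Likewise $N$ is typically reducible and singular, and the $\mathbb{C}^{*}$-retraction of its non-zero-section components does not land in $X_{0}$; the simple-connectedness claim is unjustified. The paper's endgame is different and robust: it invokes the Hwang--Nakayama criterion, which says that an endomorphism of a Fano manifold of Picard number one (other than $\mathbb{P}^{n}$) that is \'etale outside a \emph{completely invariant divisor} is bijective. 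So what must be shown is (i) $\varphi^{-1}(W_{\mathrm{red}})=W_{\mathrm{red}}$ and (ii) $\varphi$ is unramified on $X\setminus W_{\mathrm{red}}$; purity of the branch locus then forces $W$ to be a divisor unless $\varphi$ is already \'etale.

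\textbf{The mechanism for (ii) is missing.} You work with the rational lift $\widehat f(x,\xi)=\bigl(f(x),((df_{x})^{\vee})^{-1}\xi\bigr)$, which is undefined exactly on $\pi^{-1}(R)$---precisely where the conclusion is needed---and you acknowledge that the required compatibility between $R$, $B_{f}$ and $W$ is left open. The paper instead uses the \emph{dual} morphism $d\varphi^{*}\colon \varphi^{*}T^{*}X\to T^{*}X$, which is everywhere defined, and proves a descent lemma: there is a $\mathbb{C}^{*}$-equivariant surjection $\psi\colon\mathbb{C}^{n}\to\mathbb{C}^{n}$ with $\mu\circ d\varphi^{*}=\psi\circ\mu$. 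Equivariance and surjectivity give $\psi^{-1}(0)=\{0\}$. Now take $x\notin W$; by (i) also $\varphi(x)\notin W$. If $K:=\ker\bigl(d\varphi^{*}_{x}\colon T^{*}_{\varphi(x)}X\to T^{*}_{x}X\bigr)$ were nonzero, non-wobbliness of $\varphi(x)$ would give $\mu(K)\neq 0$, while $\psi(\mu(K))=\mu(d\varphi^{*}K)=\mu(0)=0$, a contradiction. Hence $d\varphi^{*}_{x}$ is injective, so an isomorphism, and $\varphi$ is unramified at $x$. This is the step your outline lacks, and it cannot be recovered from the rational lift $\widehat f$.

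For (i), your idea of pulling back non-free rational curves is essentially what the paper does; that part of your sketch is on the right track. Finally, invoking ``Drinfeld's conjecture that $W$ is a divisor'' is unnecessary (and would not be legitimate as a black box): once (i) and (ii) are in hand, purity of the branch locus handles the dichotomy between $\operatorname{codim} W\ge 2$ (then $R=\varnothing$ and $\varphi$ is \'etale, hence an isomorphism since $\pi_{1}(X)=1$) and $\operatorname{codim} W=1$ (then Hwang--Nakayama applies).
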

 
Unfortunately, there are not many examples of such Fano manifolds known. The known example is  the fine moduli space of stable vector bundles over algebraic curves. Very recently Beauville,  Etesse,  Höring, Liu and  Voisin in \cite{BV} have shown  that the co-tangent bundle of a smooth intersection of two quadrics in $\mathbb{P}^n, n \ge 4$ is algebraically completely integrable system.   The case $n=4$ has been studied by  Kim and  Lee \cite{HY}.\\
Our next goal is to give a collection of new examples of Fano manifolds of Picard number one whose co-tangent bundles are algebraically completely integrable system. In fact, we will prove the following theorem.
\begin{theorem}\label{IT1}
    Let $X$ be a smooth intersection of two quadrics in $\mathbb{P}^{2g}, g \ge 2$, then the co-tangent bundle of the variety of $(g-2)-$ planes in $X$ is algebraically completely integrable system.
\end{theorem}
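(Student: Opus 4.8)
The plan is to trade the extrinsic definition of $F$ for a ``spectral'' description attached to the pencil of quadrics defining $X$, and then to run a Hitchin-type argument on $T^*F$. Write $X=Q_0\cap Q_1\subset\mathbb{P}^{2g}$ and let $\{Q_\lambda\}_{\lambda\in\mathbb{P}^1}$ be the pencil it spans; since $X$ is smooth the discriminant of this pencil is a reduced set $\Delta\subset\mathbb{P}^1$ of $2g+1$ points, each corresponding to a corank-one quadric. Attached to the pencil is a hyperelliptic (spectral) curve $C\to\mathbb{P}^1$ together with, at each $\lambda$, the ruling data of $Q_\lambda$, and by the classical theory of intersections of two quadrics (Reid; Donagi; Desale--Ramanan) the variety $F$ of $(g-2)$-planes on $X$ admits an explicit description in terms of $C$; I expect it to be, at least birationally, a moduli space of sheaves on $C$ or on a curve/surface built from $C$. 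In any case the first step is to extract from this theory that $\dim F=2g-2$, that $F$ is a smooth Fano variety --- of Picard number one for $g\ge3$, whereas $g=2$ gives the degree-four del Pezzo surface and recovers the theorem of Kim--Lee --- and to fix a functorial recipe attaching to a $(g-2)$-plane $\Lambda\subset X$, and to a first-order deformation of $\Lambda$ inside $X$, a point of $\operatorname{Pic}(C)$ and, respectively, a section of a natural line bundle on a double cover of $C$.

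Second, I would construct the integrable structure itself: a morphism $h\colon T^*F\to B$ with $\dim B=\dim F=2g-2$. A cotangent vector $\xi\in T^*_{[\Lambda]}F$ is, by Serre duality along $\Lambda\cong\mathbb{P}^{g-2}$ applied to the conormal sequence $0\to N^*_{\Lambda/X}\to\Omega^1_X|_\Lambda\to\Omega^1_\Lambda\to 0$, the datum dual to the deformation space $H^0(\Lambda,N_{\Lambda/X})$; through the recipe of Step 1, such a $\xi$ produces a twisted-endomorphism (Higgs-type) datum over $\mathbb{P}^1$ whose characteristic polynomial cuts out a curve $C_\xi\to\mathbb{P}^1$, and $h$ sends $\xi$ to the coefficients of $C_\xi$. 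Equivalently, $h$ should be the moment map on $T^*F$ for the action of the stabilizer of the pencil inside $\mathrm{PGL}_{2g+1}$, whose generic orbits have a $(2g-2)$-dimensional quotient. One then checks that $h$ is proper with $(2g-2)$-dimensional image.

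Third --- and this is the crux --- one must identify the general fibre $h^{-1}(b)$ with a torsor over an abelian variety of dimension $2g-2$: the Jacobian, or the Prym of an appropriate double cover, of the spectral curve $C_b$. This should be the ``in families'' version of the Reid--Desale--Ramanan correspondence between isotropic linear subspaces of an intersection of quadrics and line bundles on its associated hyperelliptic curve; points of $h^{-1}(b)$ are to correspond to line bundles of a fixed numerical type on $C_b$, with the group law inherited from $\operatorname{Pic}(C_b)$. One must pin the fibre dimension down to exactly $2g-2$ and show that the closure of a general fibre is smooth and carries no rational curve, hence is an abelian variety. It then remains to verify the two structural properties of an algebraically completely integrable system: that the fibres of $h$ are Lagrangian for the canonical symplectic form $\omega$ on $T^*F$, and that the Hamiltonian flows of the components of $h$ are linear on these abelian varieties. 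Both should be formal once Step 3 is available --- via a Liouville/residue computation identifying $\omega$ with the canonical symplectic form on the relative compactified Jacobian of the family $\{C_b\}_{b\in B}$ --- just as in the standard analysis of Hitchin-type maps on moduli of sheaves with Higgs fields.

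The principal obstacle is Step 3: making the dictionary ``cotangent directions of $F$ $\longleftrightarrow$ line bundles on spectral curves'' precise and global, forcing the general fibre to have dimension exactly $2g-2$, and controlling the degenerate fibres well enough to conclude that the general fibre is a genuine abelian variety and not merely a Lagrangian subvariety. This is exactly where the even-dimensional ambient $\mathbb{P}^{2g}$ behaves differently from the odd-dimensional case, in which $F$ is itself an abelian variety and the system is trivial. A secondary but necessary point is to show that $F$ is a smooth Fano manifold of Picard number one for $g\ge3$, which I would get from the smoothness of the Fano scheme of $(g-2)$-planes on a general $(2,2)$-complete intersection in $\mathbb{P}^{2g}$ together with a Lefschetz/Barth--Larsen-type argument; alternatively, one might try to transport the algebraically completely integrable structure already known on $T^*X$ (Beauville--Etesse--H\"oring--Liu--Voisin) through the universal $(g-2)$-plane incidence $\mathcal{I}=\{(x,\Lambda):x\in\Lambda\in F\}$, though reconciling such a transport with the two relevant canonical symplectic forms looks delicate.
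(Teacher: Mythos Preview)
Your proposal is a research outline rather than a proof: you yourself flag Step~3 as the ``principal obstacle,'' and indeed nothing in the sketch pins down the base $B$, constructs the map $h$ concretely, or proves that a general fibre is an abelian variety of the correct dimension. The vague hope that a cotangent vector at $[\Lambda]$ produces ``twisted-endomorphism (Higgs-type) datum over $\mathbb{P}^1$'' is not substantiated, and the alternative you mention at the end --- transporting the Beauville--Etesse--H\"oring--Liu--Voisin structure on $T^*X$ through the incidence correspondence --- would face the dimension mismatch $\dim X=2g-2=\dim F_{g-2}(X)$ only by coincidence and does not obviously respect the symplectic forms.

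The paper's argument bypasses all of this by making the connection to the hyperelliptic curve $C$ much more precise than ``at least birationally, a moduli space of sheaves on $C$.'' The key identification, via Desale--Ramanan, is that $F_{g-2}(X)$ is \emph{isomorphic} to the fixed locus $U=\mathcal{M}_C^s(L_\omega)^{\iota}$ of the hyperelliptic involution $\iota$ acting on the moduli space of rank-$2$ stable bundles on $C$ with determinant $L_\omega$ (the line bundle of a Weierstrass point $\omega$). One then does not build an integrable system from scratch: one \emph{restricts the Hitchin map} $h\colon T^*\mathcal{M}_C^s(L_\omega)\to H^0(K_C^2)$, which is already known to be algebraically completely integrable. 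Since $T_E^*U=(T_E^*\mathcal{M}_C^s(L_\omega))^{\iota}$ for $E\in U$, the restriction lands in the $\iota$-fixed subspace $W\subset H^0(K_C^2)$, and a further calculation shows it actually lands in the hyperplane $W(\omega)\subset W$ of sections vanishing at $\omega$, which has the correct dimension $2g-2$. Poisson commutativity and generic independence are inherited for free. For the fibres, a general $s\in W(\omega)$ gives a nodal spectral curve $C_s$; on its normalization $C_s^\nu$ one has the sheet-swap $\tau$ and a lift $\eta$ of $\iota$, and the fibre $h^{-1}(s)\cap T^*U$ is identified with (an open subset of) the $(\tau\circ\eta)$-fixed part $J_{C_s^\nu}^{\tau\circ\eta}\cong f^*J_{\widetilde{C_s^\nu}}$, an abelian variety of dimension $2g-2$. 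This is exactly the spectral description you were groping for, but obtained by restriction rather than direct construction; what your plan lacks is this reduction to the involution-fixed locus of an already-understood Hitchin system.
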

 Note that the fine moduli space of vector bundles on algebraic curves and a smooth intersection of two quadrics in $\mathbb{P}^n, n \ge 5$ are projective  Fano manifold with Picard number one. 
  Since their co-tangent bundles are  algebraically completely integrable system, the Theorem \ref{IT2}, implies that the conjecture holds true for these cases.  Also, if $g=2$, then the Theorem \ref{IT1} implies that the co-tangent bundle of a smooth intersection of two quadrics in $\mathbb{P}^4$ is algebraically completely integrable system.\\
Thus, apart from the general theorem \ref{IT2} and the new collection of examples in Theorem \ref{IT1}, our article, also unifies three major results appeared in three different articles, namely \cite{HR, HY, SZ}.\\

Our main idea to prove the conjecture \ref{Conj1} is to use Hwang-Nakayama's criterion. 
 Recall that for a surjective endomorphism  $f:X\to X$,  
a reduced divisor $D$ is called a completely invariant divisor if $(f^{*}D)_{_{\text{red}}}=D$. 
Then Hwang and Nakayama have given the following criterion: 
\begin{theorem}[Hwang-Nakayama]\label{hn}
Let $X$ be a Fano manifold of Picard number one different from the
projective space. If an endomorphism of $X$ is \'{e}tale outside a completely invariant
divisor, then it is bijective.
\end{theorem}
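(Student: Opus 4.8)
The plan is to argue by contradiction. Suppose $f\colon X\to X$ is \'etale outside the completely invariant divisor $D$ but is \emph{not} bijective; I will show this forces $X\cong\mathbb P^{n}$ with $n=\dim X$. Since $f$ is a finite surjective endomorphism of the normal projective variety $X$, non-bijectivity means $d:=\deg f\ge 2$; writing $H$ for the ample generator of $\mathrm{Pic}(X)\cong\mathbb Z$ (torsion free because $X$ is Fano, hence simply connected) this reads $f^{*}H=qH$ with $q\ge 2$ and $d=q^{n}$. Complete invariance gives $f^{-1}(D)=D$ as sets, so $f$ restricts to a finite, \emph{\'etale}, self-map $f_{U}\colon U\to U$ of $U:=X\setminus D$ of degree $d\ge 2$. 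The proof is then a tug-of-war on $U$: I will show that $\pi_{1}(U)$ must be infinite, while --- as soon as $X\neq\mathbb P^{n}$ --- it must be finite.

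The first half is immediate: the finite \'etale map $f_{U}$ induces an injection $(f_{U})_{*}\colon\pi_{1}(U)\hookrightarrow\pi_{1}(U)$ with image of index $d\ge 2$, and a finite group has no proper finite-index subgroup isomorphic to the whole group, so $\pi_{1}(U)$ is infinite.

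Before the second half I would pin down $D$. Combining the ramification formula $K_{X}=f^{*}K_{X}+R$ with $(f^{*}D)_{\mathrm{red}}=D$ and \'etaleness off $D$: along any prime component $E\subseteq D$ the ramification index $e_{E}$ equals $\mathrm{mult}_{E}(f^{*}D)$ since $D$ is reduced, so $\mathrm{mult}_{E}(f^{*}D-R)=e_{E}-(e_{E}-1)=1$; as $f^{*}D$ and $R$ are both supported on $D$ this yields $f^{*}D-R=D$, i.e. $f^{*}(K_{X}+D)=K_{X}+D$. Writing $K_{X}+D\equiv cH$ and pulling back gives $cq=c$, hence $c=0$; since $\mathrm{Pic}(X)$ is torsion free, $D\in|-K_{X}|$. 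So $(X,D)$ is a reduced log Calabi--Yau pair and $U$ is the complement of an anticanonical divisor.

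The crux --- and the step I expect to be the main obstacle --- is to show $\pi_{1}(U)$ is finite once $X\neq\mathbb P^{n}$, and this is where minimal rational curves are indispensable. Being Fano, $X$ carries a dominating, unsplit family $\mathcal K$ of minimal rational curves; a general member $C$ is a smooth rational curve meeting $D$ transversally in exactly $-K_{X}\cdot C$ points, and $-K_{X}\cdot C\ge 2$ (a free rational curve has non-negative normal bundle degree $-K_{X}\cdot C-2$). The curves $C\setminus(C\cap D)$ are affine rational curves sweeping out a dense open subset of $U$. I would then show that $\pi_{1}(U)$ is generated by the meridians of $\mathcal K$ around the components of $D$, and that the incidence geometry of $\mathcal K$ --- in particular the relations forced by its reducible degenerations, together with the constraint $-K_{X}\cdot C\ge 2$ --- makes this group finite; the one configuration compatible with an \emph{infinite} such group is a family of lines through a fixed point each meeting $D$ in a single moving point, which by the Cho--Miyaoka--Shepherd-Barron characterisation forces $X=\mathbb P^{n}$. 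A finite $\pi_{1}(U)$ then contradicts the second paragraph, so $f$ was bijective after all. Everything outside this last step is formal; genuinely controlling the quasi-projective fundamental group of the complement of the anticanonical divisor, and extracting projective space from its failure to be finite, is the hard part.
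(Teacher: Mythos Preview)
The paper does not prove this statement: Theorem~\ref{hn} is quoted from Hwang--Nakayama \cite{HN} as a black-box criterion and is invoked without argument at the end of Section~\ref{S3}. There is therefore nothing in the paper itself to compare your proposal against.

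On the substance of your attempt: the formal reductions are correct and standard --- the ramification computation giving $f^{*}(K_X+D)=K_X+D$ and hence $D\in|-K_X|$, and the observation that a degree $d\ge 2$ \'etale self-cover of $U$ forces $\pi_1(U)$ to be infinite. But the decisive step fails, and not merely for lack of detail. Your argument hinges on the claim that for a Fano $X$ of Picard number one with $X\neq\mathbb{P}^n$ and a reduced $D\in|-K_X|$, the group $\pi_1(X\setminus D)$ is finite. This is false. Take $X=Q^{n}\subset\mathbb{P}^{n+1}$ a smooth quadric with $n\ge 3$, so $-K_X=nH$, and let $D=H_1+\cdots+H_n$ be a sum of $n$ general hyperplane sections. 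Since $X$ is simply connected with $H_2(X;\mathbb{Z})=\mathbb{Z}[\ell]$ and $\ell\cdot H_i=1$, one computes
\[
H_1(X\setminus D;\mathbb{Z})\ \cong\ \mathbb{Z}^{n}\big/\langle(1,\dots,1)\rangle\ \cong\ \mathbb{Z}^{\,n-1},
\]
so $\pi_1(X\setminus D)$ is infinite. In this example the minimal rational curves (lines on $Q^n$) meet $D$ in $n\ge 3$ points, so your heuristic that the constraint $-K_X\cdot C\ge 2$ together with the incidence geometry of $\mathcal{K}$ forces finiteness is simply wrong. The moral is that completely invariant divisors are far more special than generic anticanonical divisors, and any proof must exploit the dynamics of $f$ well beyond the numerical identity $D\in|-K_X|$. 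The actual argument in \cite{HN} proceeds quite differently, through the variety of minimal rational tangents and the behaviour of $f$ on families of minimal rational curves, rather than through the fundamental group of the complement.
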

Recall that if $T^*X$ is algebraically completely integrable system, then there is a morphism $T^*X \to \mathbb{C}^n$, called moment map whose fibers are Lagrangian and a general fiber is isomorphic to an open subset of an abelian variety. We define a point in $X$ as wobbly if the restriction  of the moment to the co-tangent space of the point fails to be quasi-finite. The term “wobbly" was introduced
in the paper \cite{DP} for the moduli space of vector bundles on algebraic curves in different context and in different guise. The locus of wobbly points in the moduli space of vector bundles on curves has been studied in \cite{P, PP, P1}.
In the present scenario, we will show that a point is wobbly if and only if there is a non-free rational curve in $X$ through it. Using this classification, we will show that for any non-constant endomorphism of $X$, the locus of wobbly points remains completely invariant, and it is \'{e}tale outside the wobbly locus. This forces the wobbly locus to be a divisor, or the morphism is unramified. Since $X$ is simply connected, the morphisms of degree $>1$ can not be unramified. Thus, if the degree of the morphism is bigger than $1$, then  the wobbly locus is a divisor and using \ref{hn} we conclude the theorem.\\

 {\bf Organization of the paper}:\\
 In section \ref{S1}, we will recall some basic facts which we need in the subsequent sections.
 
In section \ref{S2} we will formally define the notion of wobbly type points in $X$ and  we will study some of its properties. In fact, we will show that a point $p \in X$ is wobbly if and only if there is a non-free minimal rational curve containing it. 

In section \ref{S3}, we will show that the wobbly locus is completely invariant for any non-constant endomorphism of $X$.
Finally, in section \ref{S4} we will give some examples of Fano manifold of Picard number one whose co-tangent bundle
is algebraically completely integrable system. In fact, we prove the Theorem \ref{IT1}.

\section{Preliminaries}\label{S1}
In this subsection we will recall some basic definitions and facts about Fano manifold which are mostly taken from \cite{Kol} and \cite{H1}.
Let $X$ be a Fano manifold of dimension $n$. A parametrized rational curve in $X$ is a morphism $\mathbb{P}^1 \to X$ which is birational over its image. We will not distinguish parametrized rational curves from its image $f(\mathbb{P}^1)$ and we call $f$, a rational curve in $X$.
Recall that any vector bundle on $\mathbb{P}^1$ is a direct sum of line bundles. Thus given a rational curve 
$f: \mathbb{P}^1 \to X$, the pullback of the tangent bundle $TX$ splits as 
\begin{equation}\label{e1}
f^*TX= \mathcal{O}(a_1) \oplus \mathcal{O}(a_2) \oplus ...\oplus \mathcal{O}(a_n), a_1 \ge a_2\ge...\ge a_n.
\end{equation}
The rational curve $f$ is said to be free if all the integers $a_1, a_2, ..., a_n$ in the above equation
are non-negative.\\
From now onwards, we will assume $X$ to be a Fano manifold of dimension $n$ with Picard number $1$.\\
It is known that through every point of $X$, there is a rational curve in $X$. \\

\begin{remark}\label{T1}
Note that a rational curve $f: \mathbb{P}^1 \to X$ is not free if and only if $f^*T^*X$ admits a non-zero section vanishing at some point.
\end{remark}

\section{Algebraic completely integrable system and wobbly points}\label{S2}
Recall that a symplectic manifold $M$ of dimension $2n$ is said to be a completely
integrable Hamiltonian system if there exist functions $f_1, f_2, ..., f_n$ which Poisson-
commute and for which $df_1 \wedge df_2 \wedge ...\wedge df_n$  is generically nonzero.\\
The map $f: M \to \mathbb{C}^n $ defined by these functions is called the moment map. The moment map $f$
has the property that a generic fibre is a $n$-dimensional
submanifold with $n$ linearly independent commuting vector fields $X_{f_1}, X_{f_2},..., X_{f_n}$. If
the general fibers are  open subsets in an abelian variety and the vector fields are linear, then
we shall say that the system is algebraically completely integrable. \\
Note that when $M$ is the cotangent bundle of a projective manifold $X$, then the function $f_i$ can be thought as an element of $\oplus_{i=1}^{\infty}H^0(X, S^nTX)$, where $S^nTX$ denotes the $n-$th symmetric power of $TX$. A section $s_i$ of $S^iTX$, gives a function on $T^*X$ which is a homogenious polynomial of degree $i$ in each fiber. Thus, the natural $\mathbb{C}^*$-action, namely $c.(p, v)=(p, cv)$, where $p \in X$ and $v$ is a cotangent vector of $X$ at $p$, induces an action on the sections of $S^iTX$ as follows: $c.s_i(p, v)= c^is_i(p,v)$. This action makes  the moment map $\mu$,  $\mathbb{C}^*$-equivariant.

{\bf (*)}: Let $X$ be a projective Fano manifold of Picard rank one. Then $T^*X$ has a natural symplectic structure. We further assume that $T^*X$ is algebraically completely integreable Hamiltonian system.\\
Let $\mu: T^*X \to \mathbb{C}^n$ be the moment map, which is $\mathbb{C}^*$-equivariant with respect to the   $\mathbb{C}^*$-action defined above.\\

\begin{definition}(Wobbly point)
 Let $X$ be a Fano manifold satisfying $(*)$. A point $p \in X$ is said to be \it{non-wobbly} point if and only if the restriction of the moment map $\mu$ to  the co-tangent space at $p$ is quasi finite. The points in the complement of non-wobbly points in $X$ will be called as wobbly point. 
\end{definition}
\begin{remark}\label{WD}
Since the moment map $\mu$ is $\mathbb{C}^*$-equivariant and the restriction to the cotangent space at a non-wobbly point is quasi finite, the restriction map at a non-wobbly point is in fact finite. 
\end{remark}
\begin{proposition}\label{S3P1}
Let $X$ be a projective Fano manifold satisfying $(*)$. Then a point $p \in X$ is a wobbly point if and only if there is a nonzero vector $v \in T_p^*X$, such that $\mu(p, v)= 0$.  
\end{proposition}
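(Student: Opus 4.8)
The plan is to unwind the definitions and exploit the $\mathbb{C}^*$-equivariance of $\mu$ together with the fact that $\mu(p,0)=0$ for every $p$ (since each component $s_i\in H^0(X,S^iTX)$ is homogeneous of positive degree in the fiber variable, hence vanishes at the zero covector). The statement is really a ``quasi-finiteness versus zero-fiber'' dichotomy for the restriction $\mu_p \colon T_p^*X \to \mathbb{C}^n$.

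First I would treat the easy direction. Suppose there is a nonzero $v\in T_p^*X$ with $\mu(p,v)=0$. By $\mathbb{C}^*$-equivariance, $\mu(p,cv)=c\cdot\mu(p,v)=0$ for the induced (weighted, but still nontrivial) action, so the whole line $\mathbb{C}v\subset T_p^*X$ maps into $\mu^{-1}(0)$. Hence the fiber $\mu_p^{-1}(0)$ contains a positive-dimensional subvariety, so $\mu_p$ is not quasi-finite, i.e.\ $p$ is wobbly.

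For the converse I would argue contrapositively: assume $\mu_p^{-1}(0)=\{0\}$ and show $p$ is non-wobbly, i.e.\ $\mu_p$ is quasi-finite. The key point is again $\mathbb{C}^*$-equivariance of $\mu_p\colon T_p^*X\cong \mathbb{C}^n\to\mathbb{C}^n$, where the source carries the scaling action and the target carries the weighted action with weights $(1,2,\dots)$ coming from the degrees of the chosen $f_i$ (or more precisely whatever positive weights occur). A $\mathbb{C}^*$-equivariant morphism of affine spaces with these actions whose fiber over the unique fixed point $0$ of the target is set-theoretically the single fixed point $0$ of the source is automatically finite: the coordinate ring $\mathbb{C}[T_p^*X]$ is a finitely generated module over $\mu_p^*\mathbb{C}[\mathbb{C}^n]$ because the graded ideal generated by the pulled-back weighted coordinates is $\mathfrak{m}_0$-primary (its zero locus is $\{0\}$), so by the graded Nakayama lemma the quotient is finite-dimensional and $\mu_p$ is finite, in particular quasi-finite. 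Since the fiber over an arbitrary point $w$ has dimension at most that of the fiber over $0$ by semicontinuity (or directly, since finiteness at the fixed fiber propagates), every fiber of $\mu_p$ is finite, so $\mu_p$ is quasi-finite and $p$ is non-wobbly. This also recovers Remark~\ref{WD}.

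The main obstacle is the converse direction, and specifically making precise the claim that ``$\mu_p$ $\mathbb{C}^*$-equivariant plus zero-fiber equals $\{0\}$ implies $\mu_p$ finite.'' One must be careful that the $\mathbb{C}^*$-action on the target is via strictly positive weights (guaranteed because each $s_i\in H^0(X,S^iTX)$ with $i\ge 1$), so that $0$ really is an attracting fixed point and the graded/Nakayama argument applies; if one instead uses the conic structure of the fibers of $\mu$ directly, the argument is that $\mu_p^{-1}(0)$ is a cone through the origin, so it is either just $\{0\}$ or positive-dimensional, and there is no middle ground — which is exactly the dichotomy in the statement. I would present the cone argument as the cleanest route, deducing quasi-finiteness over every point from quasi-finiteness (indeed $\{0\}$) over the cone point by the same $\mathbb{C}^*$-homogeneity.
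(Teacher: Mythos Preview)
Your proposal is correct. The easy direction (a nonzero $v$ in the zero fibre forces wobbliness via the $\mathbb{C}^*$-orbit $\mathbb{C}v$) is exactly the paper's argument.

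For the harder direction the paper takes a different, more geometric route: starting from an arbitrary positive-dimensional fibre $\mu_p^{-1}(x)$, it uses $\mathbb{C}^*$-equivariance to see that every fibre over the orbit $\{cx:c\in\mathbb{C}^*\}$ is positive-dimensional, and then applies upper-semicontinuity of fibre dimension over the orbit closure $\overline{\{cx\}}\ni 0$ to conclude that $\mu_p^{-1}(0)$ is positive-dimensional. Your primary argument is the algebraic counterpart: you go contrapositively and invoke the graded Nakayama lemma to show that a $\mathbb{C}^*$-equivariant morphism of affine spaces (with strictly positive weights) whose zero fibre is $\{0\}$ is finite. This is a genuinely different proof and in fact yields a stronger conclusion, namely finiteness rather than mere quasi-finiteness, so you recover Remark~\ref{WD} in the same breath; the paper separates that observation out. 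Your suggested ``cleanest route'' at the end---the cone argument deducing quasi-finiteness over every point from quasi-finiteness over $0$ ``by the same $\mathbb{C}^*$-homogeneity''---is, once made precise, essentially the paper's semicontinuity step (the $\mathbb{C}^*$-action only identifies fibres within an orbit, and passing to the limit $c\to 0$ requires semicontinuity), so it is not really a third method.
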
 
\begin{proof}
Let $p \in X$ be a wobbly point. Then $\mu_{\mid_{T_p^*X}}: T_p^*X \to \mathbb{C}^n$ is not quasi finite. Thus, there 
 exist a point $x \in \mathbb{C}^n$ such that $Y:=\mu^{-1}(x) \cap T_p^*X$ is of dimension at least one. If the point $x= 0$, then we are done. If $x \ne 0$, then since $\mu$ is $\mathbb{C}^*$ equivariant, $\mu^{-1}(cx) \cap T_p^*X$ is also of dimension at least one. Now consider the one dimensional closed subset $Z$ in $\mathbb{C}^n$ difined by $x$, namely $\{cx: c \in \mathbb{C}\}$. Then $\mu^{-1}(Z) \cap T_p^*X$ is a closed subset of $T_p^*X$ and the general fiber of the restriction of the moment map has dimension at least one, each fiber has dimension at least one. In particular, $\mu^{-1}(0) \cap T_p^*X$ has dimension at least one. Hence, there is a non-zero vector $v \in T_p^*X$, such that $\mu(p, v)= 0$.\\
 Conversely, let $p \in X$ be a point such that, there is a nonzero co-tangent vector $v$ at $p$ with $\mu(p, v)=0$. Since $\mu$ is $\mathbb{C}^*$-equivariant, $\mu(p, cv)= 0$ for all $c \in \mathbb{C}^*$. Thus $\mu^{-1}(0) \cap T_p^*X$ is of positive dimensional. Hence, $p$ is wobbly.
  \end{proof}
  \begin{proposition}
  The locus of wobbly points is a proper closed subset of $X$. In other words, the non-wobbly points is a dense open subset of $X$.
  \end{proposition}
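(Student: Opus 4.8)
The plan is to exhibit the wobbly locus as the image under a proper map of a closed subset of the projectivized cotangent bundle, hence closed, and then to show it is proper by producing a single non-wobbly point. Concretely, consider the incidence set $W := \{(p,[v]) \in \mathbb{P}(T^*X) : \mu(p,v) = 0\}$. Since $\mu$ is $\mathbb{C}^*$-equivariant of positive weights in the fiber direction, the condition $\mu(p,v)=0$ is homogeneous in $v$, so $W$ is a well-defined subvariety of the projective bundle $\pi \colon \mathbb{P}(T^*X) \to X$; it is closed because it is cut out by the (homogeneous components of the) equations $\mu = 0$. By Proposition \ref{S3P1}, a point $p$ is wobbly precisely when the fiber $W_p = W \cap \pi^{-1}(p)$ is nonempty, i.e. the wobbly locus is exactly $\pi(W)$. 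Since $\pi$ is proper (a projective bundle over $X$) and $W$ is closed, $\pi(W)$ is closed in $X$. This establishes that the wobbly locus is a closed subset.

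It remains to show this closed subset is proper, i.e. that there is at least one non-wobbly point; equivalently, that $\pi(W) \neq X$. Suppose for contradiction that every point of $X$ is wobbly, so $\pi \colon W \to X$ is surjective. Then $\dim W \geq \dim X = n$. On the other hand, $W$ is a closed subvariety of the $\mathbb{C}^*$-invariant subset $Z := \mu^{-1}(0) \subset T^*X$ (it is the projectivization of the cone $\mu^{-1}(0)$ minus the zero section). Because the system is algebraically completely integrable, the generic fiber of $\mu$ has dimension exactly $n$, and the fiber $\mu^{-1}(0)$ — being $\mathbb{C}^*$-invariant, hence containing the zero section $X \subset T^*X$ — is the closure of the $\mathbb{C}^*$-orbit closure structure over which the moment map degenerates; the key point is that $\mu^{-1}(0)$ has dimension $n$ as well (it is a fiber of the equidimensional, or at least generically-$n$-dimensional-fiber, map $\mu$, and its dimension cannot jump below $n$ by upper semicontinuity, nor above $n$ by the Lagrangian/generic-fiber structure of the integrable system on the relevant component). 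Since $\mu^{-1}(0)$ contains the zero section of dimension $n$ and has dimension $n$, the zero section is a component of $\mu^{-1}(0)$, so away from the zero section the locus $\mu^{-1}(0) \setminus 0_X$ has dimension $\leq n$, whence $\dim W \leq n-1$. This contradicts $\dim W \geq n$, so not every point is wobbly.

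The main obstacle I anticipate is the dimension bookkeeping for $\mu^{-1}(0)$: one must argue carefully that the fiber over $0$ does not acquire "too much" dimension off the zero section. The clean way to handle this is to note that $\mu$ restricted to the total space minus the zero section is still $\mathbb{C}^*$-equivariant and, on a dense open set, has $n$-dimensional fibers which are open in abelian varieties (in particular their image is $2n - n = n$-dimensional, so $\mu$ is dominant); then a non-wobbly point is obtained simply by picking $p$ in the open dense subset of $X$ over which some cotangent vector maps to a point of $\mathbb{C}^n \setminus \{0\}$ with finite fiber — such $p$ exist because the general fiber of $\mu$ meets only finitely many cotangent spaces, or more directly because the locus where $\mu|_{T_p^*X}$ fails to be quasi-finite is where a positive-dimensional piece of the fiber $\mu^{-1}(0)$ sits, and we have just bounded that locus. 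Alternatively, and perhaps most efficiently, one can invoke the forthcoming characterization (to be proved in Section \ref{S2}) that wobbly points are exactly those lying on a non-free minimal rational curve, together with the standard fact that a general point of a uniruled — here Fano — manifold lies only on free rational curves; that immediately gives a dense open set of non-wobbly points. I would present the self-contained dimension argument as the main proof and remark on this second route.
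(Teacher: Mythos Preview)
Your proposal is correct, and its core is essentially the paper's own argument. For the ``proper'' part both you and the paper argue by contradiction: if every point were wobbly, then via Proposition~\ref{S3P1} and $\mathbb{C}^*$-equivariance each cotangent fiber would meet $\mu^{-1}(0)$ in positive dimension, forcing $\dim \mu^{-1}(0) \ge n+1$, which contradicts the fact that every component of every fiber of $\mu$ is Lagrangian of dimension $n$. The paper states this dimension count directly on $\mu^{-1}(0)$; you pass to the projectivization $W \subset \mathbb{P}(T^*X)$ and compare $\dim W \ge n$ with $\dim W \le n-1$, which is the same count after quotienting by the free $\mathbb{C}^*$-action off the zero section.

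The one genuine difference is that your packaging via $W$ also yields the \emph{closedness} of the wobbly locus (as $\pi(W)$ under the proper map $\pi\colon \mathbb{P}(T^*X)\to X$), whereas the paper's proof actually only establishes non-emptiness of the non-wobbly locus and leaves closedness implicit. So your route is a mild improvement in completeness rather than a different idea. Your alternative suggestion at the end (invoke the characterization of wobbly points by non-free rational curves plus the fact that a general point of a Fano manifold lies only on free curves) would also work, but is logically downstream of results the paper proves later, so the self-contained dimension argument is the right one to lead with, as you do.
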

  \begin{proof}
  Note that if $p \in X$ is a  wobbly point, then there is a non-zero vector $v \in T_p^*X$, such that $\mu(p, v)=0$. Since $\mu$ is $\mathbb{C}^*$ equivariant, $\mu(p, cv)=0$ for all $c \in \mathbb{C}$, thus $\mu^{-1}(0) \cap T_p^*X$ has dimension at least one. Thus, if all the points in $X$ are wobbly, then the dimension of $\mu^{-1}(0) \ge n+1$. But every fiber of $\mu$ is Lagrangian and hence the dimension of every component of each fiber is $n$, a contradiction. Thus, the locus of non-wobbly points is non-empty.  
  \end{proof}
\begin{proposition}\label{S3P2}
Let $p \in X$, such that there is a non-free rational curve $l$ passing through $p$. Then $p$ is wobbly.
\end{proposition}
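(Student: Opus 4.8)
The plan is to use the characterization of wobbly points from Proposition \ref{S3P1}: it suffices to produce a nonzero cotangent vector $v \in T_p^*X$ with $\mu(p,v) = 0$. The link between "non-free rational curve" and "cotangent vectors in the zero fiber of $\mu$" will be Remark \ref{T1}, which says that a rational curve $f\colon \mathbb{P}^1 \to X$ is non-free precisely when $f^*T^*X$ carries a nonzero section $\sigma$ vanishing at some point. I would first choose the non-free minimal rational curve $l$ through $p$ and such a section $\sigma \in H^0(\mathbb{P}^1, f^*T^*X)$, and examine its behavior along $f^{-1}(p)$. The key observation is that $\sigma$ assigns to each point $t \in \mathbb{P}^1$ a cotangent vector $\sigma(t) \in T^*_{f(t)}X$, and since $\sigma$ is nonzero it is nonvanishing on a dense open set; in particular we may arrange (after noting that the vanishing locus of $\sigma$ is finite, or by composing with a reparametrization) to pick $t_0$ with $f(t_0) = p$ and $v := \sigma(t_0) \ne 0$, OR handle the case where $\sigma$ vanishes at every preimage of $p$ separately.

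The heart of the argument is to show $\mu(p, v) = 0$ for this $v$. Here I would invoke the fact that the components of $\mu$ are given by global sections $s_i \in H^0(X, S^i TX)$, and that a section $\sigma$ of $f^*T^*X$ produces, by pairing, a section of $f^*S^i TX$ — more precisely, evaluating $s_i$ at the cotangent vectors $\sigma(t)$ gives a section of $\mathcal{O}_{\mathbb{P}^1}(-\text{something})$. The point is a degree count: $\sigma$ is a section of $f^*T^*X$, so it is a section of a bundle whose summands have degree $\le 0$ at the relevant summand (the curve is non-free, so some $a_j < 0$, forcing that summand of $f^*T^*X$ to have negative degree, and $\sigma$ lives there up to the vanishing behavior). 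Pairing $\sigma^{\otimes i}$ with $f^*s_i$ lands in $H^0(\mathbb{P}^1, \mathcal{O}(d))$ with $d < 0$ when the construction is set up so that $\sigma$ contributes strictly negative degree, hence this section is identically zero, giving $s_i(p, v) = 0$ for all $i$, i.e. $\mu(p,v) = 0$.

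More carefully, I expect the cleanest route is: since $l$ is non-free, write $f^*TX = \bigoplus \mathcal{O}(a_j)$ with $a_n \le \cdots \le a_1$ and $a_n < 0$ (using that $l$ is minimal and non-free; for a minimal rational curve on a Fano manifold of Picard number one the only negative possibility is $a_n = -1$ with at most finitely many such, by the standard bounds $a_j \ge -1$). Then $f^*T^*X$ has a summand $\mathcal{O}(-a_n) = \mathcal{O}(1)$ or higher, and its sections vanish somewhere; choose $\sigma$ a section of that summand vanishing at a point $t_0$ with $f(t_0) = p$ (this uses that we can position the curve so its $TX$-minimal direction points at $p$ appropriately — if not, we argue via the $\mathbb{C}^*$-equivariance and the positive-dimensionality of $\mu^{-1}(0) \cap T^*_pX$ directly). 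Then $v = \sigma(t_0)$ is a nonzero cotangent vector at $p$ (nonzero because $\sigma$ vanishes only to finite order and we can take a reparametrization), and for each $i$, $\langle f^*s_i, \sigma^{\otimes i}\rangle$ is a section of $f^*S^iTX \otimes (f^*T^*X)^{\otimes i} \to \mathcal{O}_{\mathbb{P}^1}$ which, restricted suitably, has negative degree, hence vanishes, so $s_i(p,v) = 0$.

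The main obstacle I anticipate is the bookkeeping in the previous paragraph: ensuring the existence of a section $\sigma$ of $f^*T^*X$ that simultaneously (i) is nonzero, (ii) produces a \emph{nonzero} cotangent vector at the specific point $p$, and (iii) makes all the pairings $\langle f^*s_i, \sigma^{\otimes i}\rangle$ land in negative-degree line bundles on $\mathbb{P}^1$. Conditions (ii) and (iii) pull in opposite directions — (iii) wants $\sigma$ to "use up" as much negative degree as possible, while (ii) wants $\sigma(t_0) \ne 0$. The resolution should be that the relevant summand has degree $\ge 1$ so a section vanishing at one point is still generically nonzero, and degree considerations for $f^*S^iTX$ (whose summands have degrees bounded by $i a_1$, with $a_1 \le 2$ forced on a minimal rational curve of a non-projective-space Fano) combined with the $i$-fold negative contribution of $\sigma$ tip the balance. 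Alternatively, and perhaps more robustly, one bypasses choosing $t_0$: one shows directly that the image of $\mu$ restricted to $\{\sigma(t) : t \in \mathbb{P}^1\} \subset T^*X|_l$ is a point (namely $0$) by the degree argument applied over all of $\mathbb{P}^1$ at once, and then uses $\mathbb{C}^*$-equivariance plus Proposition \ref{S3P1} to conclude $p$ is wobbly.
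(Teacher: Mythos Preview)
Your overall strategy --- produce a section $\sigma$ of $f^*T^*X$ from the non-freeness of $l$, show that $\mu$ is identically zero along the image of $\sigma$, and conclude via Proposition~\ref{S3P1} --- is exactly the paper's approach. Two simplifications in the paper would clean up the places where you flag obstacles.

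First, the degree-counting you attempt is both unnecessary and slightly off: the pairing $t \mapsto s_i(f(t),\sigma(t))$ is simply a regular function on $\mathbb{P}^1$, i.e.\ a section of $\mathcal{O}_{\mathbb{P}^1}$, not of a negative-degree line bundle. So it is automatically \emph{constant} (equivalently: $l$ is projective, $\mathbb{C}^n$ is affine, hence $\mu\circ\sigma$ is constant). Since $\sigma$ vanishes at some point, that constant is $0$. No balancing of degrees of $f^*S^iTX$ against powers of $\sigma$ is needed, and in particular you do not need $l$ to be minimal or any bound like $a_j \ge -1$.

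Second, the tension you identify between conditions (ii) and (iii) dissolves once you note that the positive-degree summand $\mathcal{O}(m) \subset f^*T^*X$ (with $m \ge 1$) is \emph{globally generated}: hence you can choose $\sigma$ to vanish at a prescribed point of $\mathbb{P}^1$ \emph{different from} the preimage of $p$, guaranteeing $v = \sigma(t_0) \ne 0$ at $t_0$ with $f(t_0)=p$. This is exactly how the paper handles it, and it removes any need for reparametrization or separate case analysis.
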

\begin{proof}
Let $l$ be a non-free  rational curve in $X$ containing $p$. Let $\mu: T^*X \to \mathbb{C}^n$ be the moment map.  Let $\mu_{\mid_l}$ be the restriction of $\mu$ to ${T^*X}_{\mid_l}$. Since $l$ is non-free, ${T^*X}_{\mid_l}$ contains a line subbundle $\mathcal{O}_{\mathbb{P}^1}(m)$ with $m \ge 1$. Since $m \ge 1, \mathcal{O}_{\mathbb{P}^1}(m)$ is globally generated. Thus, there is a non-constant section $s: l\to T^*X_{\mid_l}$ which vanish at some point different from $p$. Note that since $l$ is projective, the composition of $s$ with the moment map $\mu$ is constant. Since $s$ vanish at some point of $l$, the composition map is zero.  Thus, the restriction of the moment map to the cotangent space $T_p^*X \to \mathbb{C}^n$ has positive dimensional fiber over zero for every $p \in l.$ Thus $T_p^*X$ contains a nonzero vector $v$ such that $\mu(p,v)= 0$. Hence, by proposition \ref{S3P1}, $p$ is wobbly. 
\end{proof}
\begin{remark}\label{WD1}
Note that if $C$ is any projective curve in $X$ such that the restriction of the tangent bundle $TX$ to $C$ contains a negative degree line bundle $L$ as a  direct summand such that $L^*$ is globally generated, then by the same argument as in the proof of the previous Proposition, every point in the curve is woobly point.
\end{remark}

\begin{proposition}\label{S3P3}
 Let $x \in X$ be wobbly. Then there is a non-free  curve $C$ passing through $x$.
\end{proposition}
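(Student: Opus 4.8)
The plan is to begin from Proposition \ref{S3P1}: since $x$ is wobbly there is a nonzero covector $v\in T_x^*X$ with $\mu(x,v)=0$, so $(x,v)$ lies in the nilpotent cone $\mathcal N:=\mu^{-1}(0)\subseteq T^*X$. By hypothesis $\mathcal N$ is $\mathbb C^*$-invariant and all of its irreducible components are Lagrangian, hence of dimension $n$; one of them is the zero section $X_0\cong X$. I would choose an irreducible component $W$ of $\mathcal N$ containing $(x,v)$; since $v\neq 0$, $W\neq X_0$. The key structural input is then that a conic ($\mathbb C^*$-invariant) irreducible Lagrangian subvariety of $T^*X$ which is not the zero section is the closure of the conormal variety of a proper subvariety: so $W=\overline{N^*_{Z/X}}$ with $Z:=\overline{p(W)}\subsetneq X$, where $p\colon T^*X\to X$ is the projection. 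In particular $x\in Z$. (The degenerate case $\dim Z=0$ would force $W=T_x^*X$ and $\mu\equiv 0$ on that whole fibre; I would exclude this from genuineness of the integrable system, or argue it separately, and so assume $\dim Z\geq 1$.)

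Next I would extract a curve through $x$ carrying a positive covector subbundle. Projectivising, $\mathbb P(W)\subseteq\mathbb P(T^*X)$ is a projective variety of dimension $n-1$ lying over $Z$, and $[v]$ is a point of its fibre over $x$. Cutting $\mathbb P(W)$ by general hyperplane sections through $[v]$ gives an irreducible curve in $\mathbb P(W)$ through $[v]$ whose image in $X$ is a curve $C\subseteq Z$ through $x$; after normalising I obtain a finite morphism $g\colon\widetilde C\to X$ from a smooth projective curve onto $C$, together with a line subbundle $\mathcal L\hookrightarrow g^*T^*X$ whose fibre $\mathcal L_t$ over each $t$ is a conormal direction of $Z$ lying in $\mu^{-1}(0)$, with $\mathcal L_{t_0}=\mathbb C v$ for some $t_0$ over $x$. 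Using the $\mathbb C^*$-action, the total space of $\mathcal L$ inside $g^*T^*X$ is a union of $\mathbb C^*$-orbits contained in $W$ and contains the zero section of $\widetilde C$, so any section of $\mathcal L$ vanishing at one point lifts $\widetilde C$ into $W$, vanishing at a point and equal to $v$ (up to scalar) over $x$.

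The last step --- and the one I expect to be the real obstacle --- is to prove that $C$ is \emph{non-free} at $x$, i.e.\ that $\deg\mathcal L\geq 1$, equivalently that $g^*TX$ has a quotient line bundle of negative degree, equivalently (Remark \ref{T1}, read on the normalisation) that $g^*T^*X$ carries a nonzero section vanishing somewhere. This is exactly where one must use that $W$ lies in the \emph{zero} fibre of $\mu$ and not just that it is a conic Lagrangian. Writing $\mu=(f_1,\dots,f_n)$ with $f_i$ the symbol of a section $s_i\in H^0(X,S^{d_i}TX)$, the condition $\mathcal L_t\subseteq\mu^{-1}(0)$ says that $s_i|_C$ annihilates $\mathcal L^{\otimes d_i}$ for every $i$, and one has to combine this with $\mathbb C^*$-equivariance and with the fact that the Hamiltonian fields $X_{f_1},\dots,X_{f_n}$ are tangent to $W$ and project, at a general point, onto a positive-dimensional subspace of $TX$ (so the flows move $(x,v)$ horizontally inside $\mathcal N$ while keeping the covector conormal to $Z$) in order to force positivity of $\deg\mathcal L$. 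An equivalent, perhaps cleaner, target is to show outright that $Z=p(W)$ is swept out by non-free (minimal) rational curves --- this would both settle the proposition and give the sharper statement announced in the introduction. Points of $Z$ at which $Z$ or $W$ is singular I would handle by base-changing to a resolution of $Z$, or by specialising the curves just constructed from nearby smooth points.
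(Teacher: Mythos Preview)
Your setup via conic Lagrangian components and conormal varieties is natural and parallels the paper's decomposition of $\mu^{-1}(0)$, but the proof has a genuine gap exactly where you flag it: you never establish $\deg\mathcal L\geq 1$, and the hints you offer (that $s_i|_C$ annihilates $\mathcal L^{\otimes d_i}$, or that the Hamiltonian flows ``move horizontally'') do not by themselves force positivity. A conormal direction to $Z$ along a curve $C\subseteq Z$ can have nonpositive degree, and nothing in your argument excludes this; the vanishing of the $f_i$ on $\mathcal L$ only says $\mathcal L$ sits in the nilpotent cone, which you already knew.

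The paper circumvents this by working on the \emph{zero section} $X_1\cong X$ rather than on the nonzero component $W$ you chose. The point is the canonical splitting $T(T^*X)|_{X_1}\cong TX\oplus T^*X$: each Hamiltonian field $X_{f_i}$, restricted to $X_1$, has a horizontal ($TX$) and a vertical ($T^*X$) part. If all the $X_{f_i}$ were tangent to $X_1$ along a component of the wobbly locus (vertical part zero), their horizontal parts would span $T_pX$ at a general point $p$ there, forcing $d\mu$ to be surjective---contradicting wobbliness. Hence some $X_{f_1}$ has nonzero vertical projection at a general wobbly point, and this vertical projection \emph{is} already a section of $T^*X$ along the wobbly locus. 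Restricted to a rational curve $C$ through such a point it gives the desired nonzero section of $T^*X|_C$, vanishing precisely where $X_{f_1}$ falls back tangent to $X_1$. In other words, the paper manufactures the section of $T^*X|_C$ directly from the Hamiltonian fields, rather than trying to read it off the conormal geometry of $Z=p(W)$; this is the idea your outline is missing.
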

\begin{proof}
Since $T^*X$ is algebraically completely integrable system with the moment map $\mu$, every component of $\mu^{-1}(0)$  is of dimension $n$, and the Hamiltonian vector fields $X_{f_1}, X_{f_2}, ..., X_{f_n}$ are tangent to $\mu^{-1}(0)$.  Let $X_1, X_2, ..., X_m$ be the components of $\mu^{-1}(0)$ with $X_1$ is the component given by the zero section of $T^*X$. In other words, $X_1$ is isomorphic to $X$ and the points of $X_1$ are of the form $(p, 0), p \in X$.  Note that the components of the wobbly locus are given by $X_1 \cap X_i, i \ne 1$. Note that the Hamiltonian vector fields are tangent to $U:= X_1 \setminus \cup_{i=2}^m(X_1 \cap X_i)$. Let $W$ be a component of the wobbly locus. \\
Suppose the Hamiltonian vector fields are tangents to $X_1$ at general points of $W$. Since the Hamiltonian vector fields are commuting, $X_{f_1}((p, 0)), ..., X_{f_n}((p, 0))$ are linearly independent, provided $X_{f_i}((p, 0)) \ne 0$ for all $i$. If $X_{f_i}((p, 0))= 0$ for general points $(p, 0)$, then $X_{f_i}$ is itself zero on $W$. Therefore, $X_{f_1}, ..., X_{f_n}$ are linearly independent at general points.
On the other hand, one can see that the restriction of $T(T^*X)$ at the zero section $X_1$ is isomorphic to $TX \oplus T^*X$. Since the vector fields are tangent to $X_1$,  the projection of $X_{f_i}((p, 0))$ to $T^*X$ is zero. Hence, at a general point $(p, 0)$ of $W$, these vector fields generate the tangent space of $X_1$ at $(p, 0)$, which implies that the moment map is surjective at general points of $W$, a contradiction. \\
Thus, there is a Hamiltonian vector field, say, $X_{f_1}$ which is not tangent to $X_1$ at a general point of $W$. Let $C$ be a rational curve in $W$ through a general point.  Then, for a general point $(p, 0) \in C$, the projection of $H((p, 0))$ to $T^*X$ is non-zero, which defines a non-zero section of $T^*X \vert_C$ vanishing at some point of $C$ (the points precisely where $X_{f_1}$ is tangent to $X_1$). Hence, $C$ is not free.


\end{proof}

 \section{Woobly locus is completely invariant}\label{S3}
 Let $\mathcal{W}$ be the locus of wobbly points in $X$ and $\mathcal{W}_{\text{red}}$ be its reduced part.
 \begin{proposition}\label{p2}
Let $\varphi : X \to X$ be a non-constant endomorphism. Then the divisor $\mathcal{W}_{\text{red}}$ is completely invariant under $\varphi$, that is $\varphi^{-1}(\mathcal{W}_{\text{red}})= \mathcal{W}_{\text{red}}$. 
\end{proposition}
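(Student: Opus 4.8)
The plan is to establish the set-theoretic identity $\varphi^{-1}(\mathcal{W}_{\mathrm{red}})=\mathcal{W}_{\mathrm{red}}$, which is exactly the statement since both sides are reduced. I would split this into the inclusion $\varphi^{-1}(\mathcal{W}_{\mathrm{red}})\subseteq\mathcal{W}_{\mathrm{red}}$, which carries all the geometric content, and the reverse inclusion, which follows formally. The starting observation is that $\varphi$ is finite and surjective: on a Fano manifold of Picard number one a non-constant morphism cannot contract a curve (the pull-back of the ample generator of $\mathrm{Pic}(X)$ would be trivial on it), and a finite dominant self-morphism of a projective variety is surjective.

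For the formal reduction, suppose $\varphi^{-1}(\mathcal{W}_{\mathrm{red}})\subseteq\mathcal{W}_{\mathrm{red}}$ is known. Applying $\varphi$ and using surjectivity gives $\mathcal{W}_{\mathrm{red}}=\varphi(\varphi^{-1}(\mathcal{W}_{\mathrm{red}}))\subseteq\varphi(\mathcal{W}_{\mathrm{red}})$. Since $\varphi$ is finite, every irreducible component $W_i$ of $\mathcal{W}_{\mathrm{red}}$ has $\varphi(W_i)$ irreducible, closed, of dimension $\dim W_i$; hence a component $W$ of $\mathcal{W}_{\mathrm{red}}$ of maximal dimension, being contained in $\bigcup_i\varphi(W_i)$, must coincide with some $\varphi(W_i)$ of the same dimension, and a counting argument then forces $\varphi$ to restrict to a bijection on the set of top-dimensional components. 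Peeling these off and inducting on dimension shows that $\varphi$ permutes all components, so $\varphi(\mathcal{W}_{\mathrm{red}})=\mathcal{W}_{\mathrm{red}}$, i.e. $\mathcal{W}_{\mathrm{red}}\subseteq\varphi^{-1}(\mathcal{W}_{\mathrm{red}})$; combined with the assumed inclusion this gives the equality.

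To prove $\varphi^{-1}(\mathcal{W}_{\mathrm{red}})\subseteq\mathcal{W}_{\mathrm{red}}$, take $q$ with $\varphi(q)\in\mathcal{W}_{\mathrm{red}}$. By Proposition \ref{S3P3} there is a non-free rational curve $C$ through $\varphi(q)$, so on its normalisation $g:\mathbb{P}^{1}\to X$ the bundle $g^{*}\Omega_{X}$ has a nonzero section $\sigma$ vanishing at a point (Remark \ref{T1}); as $\mu\circ\sigma$ is a morphism $\mathbb{P}^{1}\to\mathbb{C}^{n}$ it is constant, and it vanishes where $\sigma$ does, hence is identically zero. Now pick a component $\ell$ of $\varphi^{-1}(C)$ through $q$, let $\nu:\widetilde{\ell}\to X$ be its normalisation mapped into $X$ and $h:\widetilde{\ell}\to\mathbb{P}^{1}$ the induced finite map, so that $\varphi\circ\nu=g\circ h$. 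Pulling back along $\nu$ the injection $\varphi^{*}\Omega_{X}\hookrightarrow\Omega_{X}$ dual to $d\varphi:TX\to\varphi^{*}TX$ (an isomorphism away from the ramification divisor $R$) and applying it to $h^{*}\sigma\in H^{0}(\widetilde{\ell},(\varphi\circ\nu)^{*}\Omega_{X})$ yields a nonzero section $\tau$ of $\nu^{*}\Omega_{X}$ vanishing over the zero of $\sigma$; again $\mu\circ\tau$ is a constant vanishing somewhere, hence $0$. Therefore at a general point $x$ of $\widetilde{\ell}$ the nonzero covector $\tau(x)\in T^{*}_{\nu(x)}X$ satisfies $\mu(\nu(x),\tau(x))=0$, so $\nu(x)$ is wobbly by Proposition \ref{S3P1}; hence $\ell\subseteq\mathcal{W}_{\mathrm{red}}$, and in particular $q\in\mathcal{W}_{\mathrm{red}}$.

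The step I expect to be the main obstacle is precisely the injectivity of the pulled-back map $\nu^{*}\varphi^{*}\Omega_{X}\to\nu^{*}\Omega_{X}$, which is only clear when $\ell\not\subseteq R$: one has to rule out, or separately handle, the case in which a component $\ell$ of $\varphi^{-1}(C)$ lies entirely inside the ramification divisor, the most natural route being to show first that $R\subseteq\mathcal{W}_{\mathrm{red}}$ (by an analogous argument applied to the generically nonzero kernel of $d\varphi$ along $R$), which makes that case vacuous. There is also minor bookkeeping where $C$ or $\ell$ is singular, handled by restricting to dense open subsets, but this does not affect the structure of the argument. Once the ramification case is disposed of, the two inclusions combine to give the complete invariance of $\mathcal{W}_{\mathrm{red}}$.
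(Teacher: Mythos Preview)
Your route differs from the paper's in two visible ways. First, the paper proves \emph{both} implications ``$p$ wobbly $\Rightarrow\varphi(p)$ wobbly'' and ``$p$ non-wobbly $\Rightarrow\varphi(p)$ non-wobbly'' directly, each time by pushing or pulling a non-free rational curve through $\varphi$ and invoking Remark~\ref{WD1}; you instead prove only the inclusion $\varphi^{-1}(\mathcal{W}_{\mathrm{red}})\subseteq\mathcal{W}_{\mathrm{red}}$ geometrically and recover the other one by your component-counting argument. Second, for that geometric inclusion the paper argues with the splitting type of $TX$ restricted to the pulled-back curve (``as in the previous case'' via a Harder--Narasimhan comparison), whereas you transport an explicit section of $\Omega_X$ through $d\varphi^*$ and appeal to Proposition~\ref{S3P1} via the moment map. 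Your formal half is a pleasant shortcut, and working directly with $\mu$ rather than with freeness makes the role of the integrable system more transparent.

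The point you flag as ``the main obstacle'' is, however, a real gap and your proposed patch is close to circular in the paper's architecture. You suggest disposing of the case $\ell\subseteq R$ by first proving $R\subseteq\mathcal{W}_{\mathrm{red}}$; but that inclusion is exactly Proposition~\ref{p6}, whose proof in the paper relies on Proposition~\ref{p2} (through Lemma~\ref{p5} and the surjectivity of $\psi$). Your one-line sketch (``the generically nonzero kernel of $d\varphi$ along $R$'') produces a tangent vector in $\ker d\varphi_p\subset T_pX$, not a cotangent vector in $\mu^{-1}(0)$, so it does not feed into Proposition~\ref{S3P1}; turning it into an honest proof of $R\subseteq\mathcal{W}_{\mathrm{red}}$ without already knowing complete invariance needs a genuinely new idea. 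The paper sidesteps this by staying on the level of splitting types of $h^*TX$ (Remark~\ref{WD1}), which does not require $\nu^*d\varphi^*$ to be injective; if you want to keep your moment-map argument, you should either supply an independent proof that $R\subseteq\mathcal{W}_{\mathrm{red}}$ or, more simply, observe that for $\ell\subseteq R$ one may choose the section $\sigma$ from the $(\ge 2)$-dimensional space $H^0(\mathbb{P}^1,\mathcal{O}(a))\subset H^0(g^*\Omega_X)$ so that $h^*\sigma$ avoids the kernel of $\nu^*d\varphi^*$ at a general point.
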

\begin{proof}
Note that, since $X$ has Picard rank one, any non-constant endomorphism of $X$ is finite and surjective.
Let $p \in \mathcal{W}_{\text{red}}$. Then there is a  rational curve $f: \mathbb{P}^1 \to X$, such that $p \in f(\mathbb{P}^1)$ and $f^*TX$ has a direct summand of negative degree. Then $\varphi \circ f: \mathbb{P}^1 \to X$ is a rational curve. If  $\varphi \circ f: \mathbb{P}^1 \to X$ is not free then it has a negative degree line subbundle as a direct summand and hence, by remark \ref{WD1}, $\varphi(p)$ is wobbly. 
Let us assume that  $\varphi \circ f: \mathbb{P}^1 \to X$ is free.\\
Note that if $E$ is any vector bundle on $X$, then $\mu(\varphi^*E) = m\mu(E)$, where $m$ is the degree of $\varphi$ and $\mu(E)$ denotes the slope of $E$. Thus, the Harder-Narasimhan filtration of $(\varphi \circ f)^*TX$ is the pullback of the  Harder-Narasimhan filtration of $f^*TX$. Thus, the slope of each quotient in the filtration is $m$-times of the slope of the corresponding quotient of the filtration of $f^*TX$, which is a contradiction as $f^*TX$ contains a direct summand of negative degree.\\ 
Conversely, let $p$ be a non-wobbly point. If $\varphi(p) \in \mathcal{W}_{\text{red}}$, then there exist a rational curve $g:\mathbb{P}^1 \to X$ such that $\varphi(p) \in g(\mathbb{P}^1)$ and $g$ is not free. Let $C:= \varphi^{-1}(g(\mathbb{P}^1))$. Let $\tilde{C}$ be its normalization and $h: \tilde{C} \to C \to X$ be the composition of $C \to X$ with the normalization map.  Thus, we have $(\varphi \circ h)(\tilde{C})= g(\mathbb{P}^1)$.  
 Since $g$ is not free,  as in the previous case, the restriction of $TX$ to $\tilde{C}$ contains a line subbundle of negetive degree $L$. Since such line bundle $L$ is a pullback of a line bundle on a rational curve, $L^*$ is globally generated   and $p \in h(\tilde{C})$. Hence, by remark \ref{WD1}, $p$ is wobbly, a contradiction.
This completes the proof.
\end{proof}

 Let $\varphi:X \to X$ be a surjective morphism. 
The morphism $\varphi$ will induce a morphism $d\varphi^*:\varphi^*T^*X \to T^*X$ on the total space of the cotangent bundle. Note that the fiber of $\varphi^*T^*X$ at a point $p \in X$ is isomorphic to $T^*_{\varphi(p)}X$. Thus, the moment map $\mu$ induces a map, $\varphi^*T^*X \to \mathbb{C}^n$ which also we will denote by $\mu$.  The proof of the following lemma is similar as in 
from \cite[Proposition 2.1]{KP} up to some minor modification. However, for the sake of completeness, we will record a proof here. 

\begin{lemma}\label{p5}
 There exists a morphism $\psi: \mathbb{C}^n\to \mathbb{C}^n$ such that the following diagram 
 \begin{center}
\begin{equation}\label{p4}
\begin{tikzcd}
\varphi^*T^*X\arrow[r] \arrow[d] 
 & T^*X\arrow[d] \\
 \mathbb{C}^n\arrow[r]
 & \mathbb{C}^n
 \end{tikzcd}
 \end{equation}
\end{center}
is commutes. Moreover, $\psi$ is $\mathbb{C}^*$-equivariant with respect to the action defined in section \ref{S3}.
\begin{proof}
 Let $M:= \mathbb{P}(T^*X \oplus \mathcal{O}_X)$ and $\pi: M \to X$ be the projection map. Then $M$ contains $T^*X$ as open dense subset, whose complement is a divisor $D$. Consider the fiber product diagram
 \begin{center}
\begin{equation}\label{p4}
\begin{tikzcd}
M \times_X X\arrow[r] \arrow[d] 
 & M\arrow[d] \\
 X\arrow[r]
& X
 \end{tikzcd}
 \end{equation}
\end{center}
 where the left vertical and the top horizontal maps are the projection maps $p_2, p_1$ respectively, the right vertical is the projection map $\pi$ and the lower horizontal map is the endomorphism $\varphi$. Note that the $p_1^{-1}(T^*X)$ is precisely the total space of $\varphi^*T^*X$. Let $U$ be the dense open sub set consisting non-wobbly points of $X$.  Define $\tilde{D}:= D \cap \pi^{-1}(U)$, which is open in $D$. 
 Let $Y= M \setminus \tilde{D}$ and $\tilde{Y}:= p_1^{-1}(Y)$. Note that $Y= T^*X \cup (D \cap \pi^{-1}(\mathcal{W}))$. Thus, the codimensions of the complements  of $T^*X$ and $\varphi^*T^*X$ in $Y$ and $\tilde{Y}$ respectively  are at least $2$.  Therefore, by Hartogs theorem the vertical regular maps in the theorem, extend to $\tilde{Y}$ and $Y$ respectively. \\ 
 On the other hand, the restriction of the map $\tilde{Y} \to Y$ to $\varphi^*T^*X$ is the map ${d\varphi}^*$.
  Thus, we have the following diagram:
 
 \begin{center}
\begin{equation}\label{p4}
\begin{tikzcd}
\tilde{Y} \arrow[r] \arrow[d] 
 & Y\arrow[d] \\
 \mathbb{C}^n\arrow[r]
& \mathbb{C}^n
 \end{tikzcd}
 \end{equation}
\end{center}
 
 Note that by definition of non-wobbly point, for a non-wobbly point $p$ and for  $s\in \mathbb{C}^n, T^*_pX \cap \mu^{-1}(s)$ is closed  in $M \times_X X$.  Thus, for any $s \in \mathbb{C}^n$, the closure  $\overline{\mu^{-1}(s)}$ in $M \times_X X $ is contained in $ \tilde{Y}$. But $\overline{\mu^{-1}(s)}$
projective variety.  Therefore, $\mu \circ d\varphi^{*}$ is constant.
Thus, we get a morphism $\psi: \mathbb{C}^n \to \mathbb{C}^n$ such that the following diagram commutes.
\begin{center}
\begin{equation}\label{p4}
\begin{tikzcd}
\varphi^*T^*X\arrow[r] \arrow[d] 
 & T^*X\arrow[d] \\
 \mathbb{C}^n\arrow[r]
& \mathbb{C}^n
 \end{tikzcd}
 \end{equation}
\end{center}
Clearly, $\psi:\mathbb{C}^n \to \mathbb{C}^n$ is $\mathbb{C}^*$ -equivariant since $d\varphi^*$ is $\mathbb{C}^*$-equivariant and $\mu$ is $\mathbb{C}^*$-equivariant
with respect to the induced action defined above.

\end{proof}

\end{lemma}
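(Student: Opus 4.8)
The plan is to produce the morphism $\psi$ by showing that the composite $\mu \circ d\varphi^*:\varphi^*T^*X\to\mathbb{C}^n$ is constant on the fibers of the projection $\varphi^*T^*X\to X$ over the generic point, and then extend this relation over all of $X$; finally one checks equivariance, which is formal. The essential point is that $\mu$ sends projective subvarieties of $M\times_X X$ to points, so the key task is to realize the closures of the relevant fibers as projective varieties living inside a space to which $\mu\circ d\varphi^*$ extends.

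First I would introduce the fiberwise compactification $M:=\mathbb{P}(T^*X\oplus\mathcal O_X)$ with projection $\pi:M\to X$, so that $T^*X\subset M$ is a dense open subset with boundary divisor $D$. Pulling back along $\varphi$ gives the fiber product $M\times_X X$, inside which $\varphi^*T^*X$ sits as the preimage of $T^*X$. The point of working over $M$ rather than $T^*X$ is that one can now take closures of fibers of $\mu$. Second, I would restrict attention to the locus $U\subseteq X$ of non-wobbly points, which is dense open by the previous proposition, and remove from $M$ only the part of $D$ lying over $U$: set $\tilde D:=D\cap\pi^{-1}(U)$, $Y:=M\setminus\tilde D$, and $\tilde Y:=p_1^{-1}(Y)$ in the fiber product. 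The complements of $T^*X$ in $Y$ and of $\varphi^*T^*X$ in $\tilde Y$ are then contained in $\pi^{-1}(\mathcal W)$ (intersected with $D$), hence of codimension at least two, so by Hartogs the maps $\varphi^*T^*X\to\mathbb{C}^n$ and $T^*X\to\mathbb{C}^n$ (both coming from $\mu$) extend to regular maps $\tilde Y\to\mathbb{C}^n$ and $Y\to\mathbb{C}^n$, and the top horizontal map $d\varphi^*$ extends to a map $\tilde Y\to Y$.

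Third, the crux: for a non-wobbly point $p$ and any $s\in\mathbb{C}^n$, the set $T^*_pX\cap\mu^{-1}(s)$ is a closed subscheme of $M\times_X X$ — this is exactly the content of quasi-finiteness together with properness of $\pi$ after compactification, using Remark~\ref{WD} (finiteness at non-wobbly points). Consequently, for any $s$, the closure $\overline{\mu^{-1}(s)}$ taken inside $M\times_X X$ is contained in $\tilde Y$, and it is a projective variety. Since $\mu\circ d\varphi^*$ is a regular map on $\tilde Y$ and $\overline{\mu^{-1}(s)}$ is projective and connected along each fiber, $\mu\circ d\varphi^*$ is constant on it. This furnishes, over $U$, a set-theoretic assignment $\mu(p,v)\mapsto \mu(d\varphi^*(p,v))$ that does not depend on $(p,v)$ but only on $\mu(p,v)$; regularity of $\mu$ on $T^*X$ and of $\mu\circ d\varphi^*$ on $\varphi^*T^*X$, together with the fact that $\mu$ is dominant with irreducible generic fiber, upgrades this to a morphism $\psi:\mathbb{C}^n\to\mathbb{C}^n$ making the square commute on the dense open locus, and hence everywhere by continuity/density. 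Finally, $\mathbb{C}^*$-equivariance of $\psi$ follows immediately: $d\varphi^*$ is $\mathbb{C}^*$-equivariant (it is fiberwise linear), $\mu$ is $\mathbb{C}^*$-equivariant by construction, so the induced map $\psi$ intertwines the two $\mathbb{C}^*$-actions.

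The main obstacle is the third step — pinning down precisely why $\overline{\mu^{-1}(s)}$ lies in $\tilde Y$ and is projective. One must be careful that the boundary divisor $D$ over the non-wobbly locus $U$ does not meet these closures: this is where non-wobbliness (quasi-finiteness, hence finiteness, of $\mu|_{T^*_pX}$) is used, as it prevents the fiber of $\mu$ from "escaping to infinity" in the fiber direction over a non-wobbly point. Away from $U$, over $\mathcal W$, one simply does not need control because the relevant complements are small (codimension $\ge 2$) and Hartogs does the work. Everything else — the fiber product setup, the Hartogs extension, and the equivariance — is routine.
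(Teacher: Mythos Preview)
Your proposal is correct and follows essentially the same approach as the paper's own proof: the same fiberwise compactification $M=\mathbb{P}(T^*X\oplus\mathcal{O}_X)$, the same removal of $\tilde D=D\cap\pi^{-1}(U)$ to obtain $Y$ and $\tilde Y$, the same Hartogs extension of $\mu$ across the codimension-$\ge 2$ boundary, and the same key step that for a non-wobbly point the closure $\overline{\mu^{-1}(s)}$ is projective and contained in $\tilde Y$, forcing $\mu\circ d\varphi^*$ to be constant on it. Your write-up is in fact somewhat more careful than the paper's in justifying why the assignment upgrades to a morphism $\psi$ and in invoking Remark~\ref{WD} explicitly, but the argument is the same.
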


\begin{lemma}
 The morphism $\psi:\mathbb{C}^n\to \mathbb{C}^n$ in Lemma \ref{p5} is surjective.
 \begin{proof}
 Since $\varphi:X \to X$ is a finite morphism, it is   unramified on an open set $U\subset X$.
 Now let $V\subset X$ be the set of all non-wobbly points in $X$. Then $V$ is a non-empty open subset of $X$. Since $X$ 
  is irreducible $U\cap V\neq \varnothing$. Choose, $x\in U\cap V$. Then the moment map $\mu:T_x^*X \to \mathbb{C}^n$ is surjective. 
 On the other hand, as $\varphi$ is unramified therefore, $d\varphi^*:T_{x^{'}}^*X \to T_{x}^*X$ is surjective where $x^{'}=\varphi(x)$.
  Therefore, from the commutative diagram 
 \begin{center}
\begin{equation}\label{p4}
\begin{tikzcd}
T_{x^{'}}^*X \arrow[r] \arrow[d] 
 & T_{x}^*X \arrow[d] \\
 \mathbb{C}^n\arrow[r]
 & \mathbb{C}^n
 \end{tikzcd}
 \end{equation}
\end{center}
it follows that $\psi$ is surjective.
\end{proof}
\end{lemma}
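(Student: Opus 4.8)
The plan is to test $\psi$ on a single, well-chosen cotangent space and then chase the commutative square of Lemma~\ref{p5}. Since $\varphi$ is a finite morphism of smooth varieties it is unramified (indeed étale) over a nonempty Zariski-open set $U\subseteq X$; let $V\subseteq X$ be the locus of non-wobbly points, which is a nonempty open set by an earlier proposition. As $X$ is irreducible, $U\cap V\neq\varnothing$; I would fix a point $x\in U\cap V$ and set $x'=\varphi(x)$.

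Restricting the square of Lemma~\ref{p5} to the fibre over $x$ and using the identification $(\varphi^*T^*X)_x\cong T^*_{x'}X$, the top arrow becomes $d\varphi^*_x\colon T^*_{x'}X\to T^*_xX$, the left vertical becomes $\mu|_{T^*_{x'}X}$, the right vertical becomes $\mu|_{T^*_xX}$, and the bottom arrow is $\psi$; commutativity therefore yields an identity of morphisms $\mathbb{C}^n\to\mathbb{C}^n$
\[
\psi\circ\bigl(\mu|_{T^*_{x'}X}\bigr)\;=\;\bigl(\mu|_{T^*_xX}\bigr)\circ d\varphi^*_x .
\]
Hence it suffices to show the right-hand side is surjective, for then $\psi$ must be surjective. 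Since $\varphi$ is unramified at $x$ and $X$ is equidimensional, $d\varphi_x$ is an isomorphism, so its dual $d\varphi^*_x$ is an isomorphism as well. Since $x$ is non-wobbly, Remark~\ref{WD} tells us $\mu|_{T^*_xX}$ is a finite morphism $\mathbb{C}^n\to\mathbb{C}^n$; a finite morphism between irreducible varieties of the same dimension has closed image of full dimension $n$, hence its image is all of $\mathbb{C}^n$ and $\mu|_{T^*_xX}$ is surjective. The right-hand side is then a composite of two surjections, and we are done.

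The statement is, in essence, a diagram chase resting on Lemma~\ref{p5}, so I do not expect a genuine obstacle; the one point that deserves care is the passage from ``quasi-finite'' to ``surjective'' for $\mu|_{T^*_xX}$, since a quasi-finite dominant self-map of $\mathbb{C}^n$ need not be onto. This is supplied by the $\mathbb{C}^*$-equivariance used throughout: at the non-wobbly point $x$ one has $\mu^{-1}(0)\cap T^*_xX=\{0\}$ by Proposition~\ref{S3P1}, and the grading (the $i$-th component of $\mu$ is homogeneous of positive degree in the fibre coordinates) makes the components of $\mu$ a homogeneous system of parameters on $T^*_xX\cong\mathbb{C}^n$, whence $\mu|_{T^*_xX}$ is module-finite — which is exactly what Remark~\ref{WD} records. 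With that in hand the argument above goes through without further difficulty.
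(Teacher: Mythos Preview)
Your argument is correct and follows essentially the same route as the paper: pick a point $x$ that is both non-wobbly and a point where $\varphi$ is unramified, restrict the commutative square of Lemma~\ref{p5} to the fibre over $x$, and conclude surjectivity of $\psi$ from the surjectivity of $\mu|_{T^*_xX}\circ d\varphi^*_x$. Your additional care in justifying why $\mu|_{T^*_xX}$ is genuinely finite (hence surjective) via the $\mathbb{C}^*$-equivariance and Remark~\ref{WD} is a welcome clarification of a point the paper leaves implicit.
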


\begin{proposition}\label{p6}
The morphism $\varphi$ is unramified outside $\mathcal{W}_{\text{red}}$.
\end{proposition}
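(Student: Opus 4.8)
The plan is to show that at any non-wobbly point $p$, the differential $d\varphi_p$ is an isomorphism, equivalently the codifferential $d\varphi^*: T^*_{\varphi(p)}X \to T^*_pX$ is an isomorphism. First I would record the commutative square from Lemma \ref{p5} restricted to fibers over $p$ and $\varphi(p)$:
\begin{equation*}
\begin{tikzcd}
T^*_{\varphi(p)}X \arrow[r, "d\varphi^*"] \arrow[d, "\mu"'] & T^*_pX \arrow[d, "\mu"] \\
\mathbb{C}^n \arrow[r, "\psi"] & \mathbb{C}^n
\end{tikzcd}
\end{equation*}
By Proposition \ref{p2}, $\varphi(p)$ is non-wobbly, so by Remark \ref{WD} the left vertical arrow is finite; the right vertical arrow is finite for the same reason since $p$ is non-wobbly. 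The codifferential $d\varphi^*$ is a linear map between vector spaces of the same dimension $n$, so it is an isomorphism if and only if it is injective, if and only if its kernel is zero.

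Suppose for contradiction that $d\varphi^*$ is not injective, so there is a nonzero $v \in T^*_{\varphi(p)}X$ with $d\varphi^*(v) = 0$. Then $\mu(d\varphi^*(v)) = \mu(p, 0) = 0$, hence by commutativity $\psi(\mu(\varphi(p), v)) = 0$. Since $\psi$ is $\mathbb{C}^*$-equivariant, $\psi(0) = 0$, and I would like to conclude $\mu(\varphi(p), v) = 0$, which by Proposition \ref{S3P1} would make $\varphi(p)$ wobbly, contradicting Proposition \ref{p2}. The key point needed to close this gap is that $\psi^{-1}(0) = \{0\}$: since $\psi$ is $\mathbb{C}^*$-equivariant and degree-$i$-homogeneous on each coordinate block, $\psi^{-1}(0)$ is a cone, and if it were positive-dimensional one could pull back along $\mu$ — which is finite over $0$ at the non-wobbly point $\varphi(p)$ — to force $\mu^{-1}(0) \cap T^*_{\varphi(p)}X$ to be positive-dimensional, again contradicting non-wobbliness. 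Alternatively, one argues directly: $\mu \circ d\varphi^*$ restricted to $T^*_{\varphi(p)}X$ is finite (it is the composite of the finite map $\mu$ on $T^*_pX$ with the linear $d\varphi^*$, and if $d\varphi^*$ had a kernel the composite would contract a line, contradicting finiteness), and a finite morphism that factors through $\psi$ and through $\mu$ on the source forces $d\varphi^*$ to be injective.

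The main obstacle I anticipate is precisely this step: ruling out that $d\varphi^*$ has a kernel even though both vertical maps are merely finite rather than immersions. The cleanest route is to observe that finiteness of $\mu|_{T^*_pX}$ forces $\mu$ to be a quasi-finite, hence (being homogeneous and proper on projective closures as in Lemma \ref{p5}) finite, morphism with $\mu^{-1}(0) \cap T^*_pX = \{0\}$; then $\mu|_{T^*_{\varphi(p)}X} \circ \mathrm{id}$ being finite while $\mu \circ d\varphi^* = \psi \circ \mu$ forces $\ker d\varphi^* \subseteq (\mu|_{T^*_{\varphi(p)}X})^{-1}(\psi^{-1}(0))$; since the outer composite $\mu \circ d\varphi^*$ has finite fibers over $0$ (its fiber is $(d\varphi^*)^{-1}(\{0\}) = \ker d\varphi^*$, which must therefore be $\{0\}$), we conclude $\ker d\varphi^* = 0$. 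Hence $d\varphi^*$ is an isomorphism at every non-wobbly point, so $\varphi$ is étale — in particular unramified — outside $\mathcal{W}_{\mathrm{red}}$, which is the assertion of Proposition \ref{p6}.
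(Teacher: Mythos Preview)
Your overall architecture is the same as the paper's: restrict the square of Lemma~\ref{p5} to the fibres over a non-wobbly point $p$ and its image, use Proposition~\ref{p2} to know both are non-wobbly, and then argue that a nonzero $v\in\ker d\varphi^*$ would force $\psi(\mu(\varphi(p),v))=0$ while $\mu(\varphi(p),v)\neq 0$, contradicting $\psi^{-1}(0)=\{0\}$. The gap is in how you justify that last equality.

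Your first attempt (``pull back along $\mu$ \ldots\ to force $\mu^{-1}(0)\cap T^*_{\varphi(p)}X$ to be positive-dimensional'') confuses $\mu^{-1}(0)$ with $\mu^{-1}(\psi^{-1}(0))$. Pulling back a positive-dimensional $\psi^{-1}(0)$ along the finite map $\mu$ gives that $(\psi\circ\mu)^{-1}(0)=(\mu\circ d\varphi^*)^{-1}(0)=\ker d\varphi^*$ is positive-dimensional, which is exactly what you are trying to \emph{rule out}, not a contradiction you can use. Your alternative arguments are circular: you assert that $\mu\circ d\varphi^*$ has finite fibre over $0$, and then identify that fibre with $\ker d\varphi^*$ --- but the finiteness of that fibre is precisely the statement $\ker d\varphi^*=\{0\}$ that you are trying to prove. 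Nothing you have written actually establishes $\psi^{-1}(0)=\{0\}$.

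The paper closes this gap with an input you do not mention: the separate lemma immediately preceding Proposition~\ref{p6}, which shows that $\psi:\mathbb{C}^n\to\mathbb{C}^n$ is \emph{surjective} (by choosing a non-wobbly unramified point and chasing the square there). The paper then invokes surjectivity together with $\mathbb{C}^*$-equivariance to conclude $\psi^{-1}(0)=\{0\}$. You should either invoke that surjectivity lemma, or supply an independent argument for $\psi^{-1}(0)=\{0\}$ that does not presuppose the injectivity of $d\varphi^*$.
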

\begin{proof}
We need to show that $\varphi_{\mid_{X \setminus \mathcal{W}_{\text{red}}}}:X \setminus \mathcal{W}_{\text{red}} \to X \setminus \mathcal{W}_{\text{red}}$ is unramified. 
Note that a finite morphism $g:Z\to Z$ of a smooth projective variety over complex numbers is unramified if and only if 
$T_{_z}Z\to T_{_{g(z)}}Z$ is injective, equivalently $T_{_{g(z)}}^*Z\to T_{_{z}}^*Z$ is surjective.

Let $x \in X \setminus \mathcal{W}_{\text{red}}$. Then $x$ and $\varphi(x)$ are both non-wobbly, as $Y_{\text{red}}$ is a completely invariant divisor.
Thus, both the vertical maps in the following commutative diagram of morphisms
\begin{center}
\begin{equation}\label{p8}
\begin{tikzcd}
T_{\varphi(x)}^*X \arrow[r] \arrow[d] 
 & T_{x}^*X \arrow[d] \\
 \mathbb{C}^n \arrow[r]
 & \mathbb{C}^n
 \end{tikzcd}
 \end{equation}
\end{center}
are surjective.
Let $K$ be the kernel of $d\varphi^*:T_{\varphi(x)}^*X \to T_{x}^*X$. If $K\neq 0$, then $\mu(K)\neq 0$ and  by the above commutative diagram 
we have, $\psi(\mu(K)) = 0$.  Since  $\psi$ is surjective and $\mathbb{C}^*$-equivariant, $\psi^{-1}(0)= 0$, which is a contradiction.
Thus $K=0$, which concludes the proof of the proposition. 
\end{proof}
{\bf Proof of Theorem \ref{IT2}}: The theorem \ref{IT2} now follows from Theorem \ref{hn}, Proposition \ref{p2} and Proposition \ref{p6}.

\section{ Examples}\label{S4}
{\bf 1}. Let $C$ be a smooth projective curve of genus $g \ge 2$ over the field of complex numbers.  Let $\mathcal{M}$ and $\mathcal{M}_H$ be the moduli space of stable vector bundles and the moduli space of Higgs bundles on $C$ of rank $r$ and fixed determinant of degree $d$ such that $r$ and $d$ are coprime. Then it is known that $\mathcal{M}$ is a Fano manifold of dimension $r^2(g-1) -g+1$ and  of Picard number one. The cotangent bundle of $\mathcal{M}$ sits inside $\mathcal{M}_H$ as an open dense subset. 
There is morphism $\mathcal{M}_H \to \mathbb{C}^{r^2(g-1) -g +1}$ known as Hitchin map. The restruction of the Hitchin map to the cotangent bundle of $\mathcal{M}$ has the property $(*)$ and $(**)$ as in Section 4. Thus, by Theorem \ref{IT2}, any non-constant endomorphism of $\mathcal{M}$ is an automorphism, which gives an alternative proof of Hwang-Ramanan's theorem \cite[Theorem 5.6]{HR}.

{\bf 2}. Let $X$ be a smooth intersection of two quadrics in $\mathbb{P}^n, n \ge 5$. Then $X$ is a projective Fano manifol of Picard number one. Furthermore, by \cite[Theorem a, b, c]{BV}, the co-tangent bundle of $X$ is algebraically completely integrable system. Thus, by Theorem \ref{IT2}, the conjecture \ref{Conj1} holds true for $X$. \\

{\bf 3}. Let $C$ be smooth projective hypereelliptic curve of genus $g\ge 2$ and $\mathcal{M}_C^s(\delta)$ be moduli space of rank $2$ stable bundles on $C$ with fixed determinant $\delta$, where $\delta$ is the trivial bundle or the line bundle $L_\omega$ associated to a Weierstrass point $\omega \in C$. It is known that $\mathcal{M}_C^s(\delta)$ is smooth quasi projective variety.
Clearly $\delta$ is $\iota-$invariant, where $\iota$ is the hyperelliptic involution on $C$. 
Then $\iota$ also induces an involution on $\mathcal{M}_C^s(\delta)$ which take $E \to \iota^*E$. Let $U:= {\mathcal{M}_C^s(\delta)}^{\iota}$ be the $\iota$-fixed locus of $\mathcal{M}_C^s(\delta)$. Then $U$ is also a smooth quasi projective variety \cite{abx}.  We will show that $T^*U$ is algebraically completely integrable system. \\
Let $\mathcal{M}_H$ be the moduli space of semistable Higgs bundles of rank $2$ and fixed determinant $\delta$ on $C$. It is known that the cotangent bundle $T^*\mathcal{M}_C^s(\delta)$ sits inside $\mathcal{M}_H$ as open dense subset. There is proper surjective morphism
\[
h: \mathcal{M}_H \to H^0(K^2),
\]
called the Hitchin map. The restriction of the Hitchin map to $T^*\mathcal{M}_C^s(\delta)$ induces a morphism, 
\begin{equation}\label{EE-1}
    h: T^*\mathcal{M}_C^s(\delta) \to H^0(K^2).
\end{equation}
We recall the relation between the spectral curve and the fiber of the Hitchin map. The details can be found in \cite{BNR}.  For a general, $s \in H^0(K^2),$ the spectral curve $C_s$ is the $2-$sheeted cover of $C$ whose branch locus is the zeros of $s$.  Let 
\[
\pi: C_s \to C
\]
 be the spectral covering map.  
The fiber of $h$ over the point $s$ is isomorphic to the Prym variety $\text{Prym}(C_s/C)$ associated to the covering $\pi$. The association is given by 
 \[
 L \to \pi_*L.
 \]
 Note that $\pi_*L$ is a vector bundle of rank $2$ on which there is a $\pi_*(\mathcal{O}_{C_s})$-module structure. The $\pi_*(\mathcal{O}_{C_s})$-module structure on $\pi_*L$ gives a morphism $\phi: \pi_L \to K \otimes \pi_*L$ such that $\text{det}(\phi)=s$. \\
 The degree of $\pi_*L= \text{ degree of }\text{Nm}(L) \otimes K^{-1} $, where $\text{Nm}: J(C_s) \to J(C)$ be the norm map between the Jacobians associated to $\pi$.\\
\begin{remark}\label{ER0}
 If $E \in U$, then $\iota$ gives an involution on the cotangent space $T^*_E\mathcal{M}_C(\delta)$ and the $\iota-$fixed locus is the co-tangent space of $U$ at $E$ \cite{JD}.
\end{remark}
\begin{proposition}\label{EP1}
Let $\delta$ be the trivial line bundle on $C$. Then $T^*U$ is completely integrable Hamiltonian system. 
\end{proposition}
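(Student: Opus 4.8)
The plan is to realise $T^{*}U$ as the fixed locus of the hyperelliptic involution acting on the Hitchin system of $\mathcal{M}:=\mathcal{M}_{C}^{s}(\delta)$, and to transport the algebraically completely integrable structure from the Hitchin system to this fixed locus. First I would set up the equivariance. The involution $\iota$ of $C$ induces the involution $\iota_{\mathcal{M}}\colon E\mapsto\iota^{*}E$ of $\mathcal{M}$, whose fixed locus is $U$; its cotangent lift $\widehat{\iota}\colon T^{*}\mathcal{M}\to T^{*}\mathcal{M}$ preserves the tautological $1$-form, hence the canonical symplectic form; and, using the canonical isomorphism $\iota^{*}K\cong K$, one obtains the linear involution $\iota_{B}\colon H^{0}(K^{2})\to H^{0}(K^{2})$, $s\mapsto\iota^{*}s$. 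From the description $(E,\phi)\mapsto\det\phi$ of the Hitchin map \eqref{EE-1} one reads off the equivariance $h\circ\widehat{\iota}=\iota_{B}\circ h$. Since the restriction of the Hitchin map to $T^{*}\mathcal{M}$ is an algebraically completely integrable Hamiltonian system (Hitchin; see \cite{BNR}) --- its $3g-3$ component functions Poisson-commute, a general fibre is an open subset of the abelian variety $\mathrm{Prym}(C_{s}/C)$, and the Hamiltonian flows are linear there --- the task is to inherit all of this on the $\widehat{\iota}$-fixed locus.

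Next I would identify the fixed locus and carry out the dimension bookkeeping. A fixed point of $\widehat{\iota}$ must lie over a fixed point of $\iota_{\mathcal{M}}$, i.e.\ over $U$, and by Remark \ref{ER0} the $\widehat{\iota}$-fixed subspace of $T^{*}_{E}\mathcal{M}$ equals $T^{*}_{E}U$ for $E\in U$; this gives $(T^{*}\mathcal{M})^{\widehat{\iota}}=T^{*}U$, and (since by Remark \ref{ER0} the involution acts by $-1$ on the conormal bundle of $U$ in $\mathcal{M}$) the tautological $1$-form of $T^{*}\mathcal{M}$ restricts to that of $T^{*}U$, so the symplectic form that $T^{*}U$ inherits as the fixed locus of the symplectic involution $\widehat{\iota}$ is its own canonical cotangent symplectic form. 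Restricting $h$ and using equivariance produces $h_{U}\colon T^{*}U\to H^{0}(K^{2})^{\iota}$. On a hyperelliptic curve the canonical series is pulled back from the quotient $\mathbb{P}^{1}$, so $H^{0}(K^{2})^{\iota}$ consists of quadratic differentials pulled back from $H^{0}(\mathbb{P}^{1},\mathcal{O}(2g-2))$ and has dimension $2g-1$; on the other hand $\dim U=2g-1$ as well, which I would deduce either from the deformation-theoretic identification $T_{E}U=H^{1}(C,\mathrm{End}_{0}(E))^{\iota}$ (with $\mathrm{End}_{0}(E)$ the trace-free endomorphisms of $E$) or by presenting an $\iota$-invariant bundle as a rank-$2$ parabolic bundle on $\mathbb{P}^{1}$ with parabolic structure at the $2g+2$ Weierstrass points, whose moduli space has dimension $(2g+2)-3$; the case $g=2$, where $\mathcal{M}=\mathbb{P}^{3}$ with $\iota$ acting trivially and $U=\mathcal{M}$, is consistent with this. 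Thus the components of $h_{U}$ are $\dim U$ functions on the $2\dim U$-dimensional symplectic manifold $T^{*}U$, and they Poisson-commute, being restrictions to the symplectic submanifold $T^{*}U$ of the Poisson-commuting components of $h$ whose Hamiltonian vector fields, being $\iota$-invariant, are tangent to $T^{*}U$.

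It then remains to analyse the general fibre of $h_{U}$. For $\iota$-invariant $s$ the involution $\widehat{\iota}$ lifts to an involution $\sigma$ of the spectral curve $C_{s}$ lying over $\iota$, and $h_{U}^{-1}(s)$ is the fixed locus of the induced automorphism $\sigma^{*}$ of the Hitchin fibre $h^{-1}(s)$, a torsor under $\mathrm{Prym}(C_{s}/C)$. Since $\sigma^{*}$ acts on $\mathrm{Prym}(C_{s}/C)$ as a group automorphism, I expect this fixed locus to be a finite union of translates of an abelian subvariety, and I would compute, through the genus of $C_{s}/\sigma$ and the induced tower of double covers over $\mathbb{P}^{1}$, that the subvariety has dimension $2g-1=\dim U$; this simultaneously shows that $h_{U}$ is dominant and that its general fibre is an open subset of an abelian variety of dimension $\dim U$, while the Hamiltonian flows of $h_{U}$, being restrictions of the (translation-invariant) linear Hitchin flows, stay linear on this subtorus. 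With this, $T^{*}U$ is an algebraically completely integrable Hamiltonian system. The step I expect to be the real obstacle is precisely this last one: identifying the $\sigma^{*}$-fixed locus inside $\mathrm{Prym}(C_{s}/C)$ as an abelian variety of exactly dimension $2g-1$ (equivalently, the non-emptiness and dimension of the generic fibre of $h_{U}$), which requires a careful spectral-curve analysis of $\sigma$ and of its ramification over the Weierstrass points of $C$; by contrast, the symplectic structure and the Poisson-commutativity are formal consequences of restricting to the fixed locus of a symplectic involution.
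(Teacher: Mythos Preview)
Your approach is essentially the same as the paper's: restrict the Hitchin map to the $\iota$-fixed locus $T^{*}U\subset T^{*}\mathcal{M}_{C}^{s}(\delta)$, identify the target as $W=H^{0}(K^{2})^{\iota}$, and inherit Poisson commutativity and generic independence from the ambient Hitchin system together with the dimension equality $\dim U=\dim W=2g-1$ (which you compute explicitly, whereas the paper simply asserts that ``$T^{*}U$ and $W$ have the correct dimensions''). Note that Proposition~\ref{EP1} only claims complete integrability, not \emph{algebraic} complete integrability; your third paragraph---the spectral-curve analysis of the $\sigma^{*}$-fixed locus inside $\mathrm{Prym}(C_{s}/C)$---is precisely the content of the paper's separate Proposition~\ref{EP2}, and you are right that this is where the substantive work lies.
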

\begin{proof}
Note that, $K$, the canonical bundle of $C$ is $\iota-$fixed. Thus $\iota$ acts on $H^0(K_C^2)$. Let $W:= {H^0(K_C^2)}^{\iota}$ be its fixed locus.  
 Also note that if $E \in U$ and $v \in T^*_E\mathcal{M}_C^s(\delta)$ is a $\iota$-fixed cotangent vector  , then $h(E, v) \in W$. By remark \ref{ER0}, $T^*_EU, E \in U$ is isomorphic to the $\iota-$fixed locus ${T^*_E\mathcal{M}_C^s(\delta)}^{\iota}$. Thus, the morphism $h$ in \ref{EE-1} induces a map $h:T^*U \to W$.\\
Note that $T^*U$ is a symplectic manifold with the induced symplectic structure on $T^*\mathcal{M}_C^s(\delta)$.  
Since $T^*\mathcal{M}_C^s(\delta)$ is a completely integrable Hamiltonian system and the Hitchin map is the moment map, there is a basis of $H^0(C, K^2)$ which gives a set of Poisson commuting functions on $T^*\mathcal{M}_C^s(\delta)$ and the functions are generically independent. Therefore, there is a basis of $W$, which gives Poisson commuting functions on $T^*U$ with respect to the induced Poisson structure on $T^*U$. The generically non-degenerateness of the functions in the basis follows from the fact that $h: T^*U \to W$ is surjective and $T^*U$ and $W$ have the correct dimensions.\\

\end{proof}


\begin{proposition}\label{EP2}
A general fiber of $h: T^*U \to W$ is isomorphic to an open subset of an abelian variety.
\end{proposition}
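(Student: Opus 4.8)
The plan is to describe a general fiber of $h\colon T^*U\to W$ by spectral curves: via the Hitchin/spectral correspondence it should be an open subset of the fixed locus of a lifted involution on the Prym variety of a double cover of $C$, and the fixed locus of an involution on an abelian variety is automatically a disjoint union of translates of an abelian subvariety.

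Fix a general $s\in W$. Since $W=H^0(K^2)^{\iota}$ consists of the quadratic differentials pulled back from $\mathbb{P}^1=C/\iota$, a general such $s$ has $4g-4$ distinct zeros, none of them Weierstrass points, so the spectral curve $\pi\colon C_s\to C$ is smooth of genus $4g-3$. The canonical $\iota$-linearization of $K$ induces an involution of the total space of $K$ which, as $s$ is $\iota$-invariant, preserves $C_s$; let $\tilde\iota\colon C_s\to C_s$ be this lift of $\iota$ — in terms of the tautological section $\lambda$ it is the lift with $\tilde\iota^{*}\lambda=\lambda$, the other lift (differing by the sheet-involution $\tau$ of $\pi$) negating $\lambda$. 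Since $s$ is non-zero at every Weierstrass point, over each of the $2g+2$ fixed points of $\iota$ the two preimages in $C_s$ are interchanged by $\tilde\iota$, so $\tilde\iota$ is fixed-point free and $q\colon C_s\to \bar{C}_s:=C_s/\langle\tilde\iota\rangle$ is an \'{e}tale double cover with $g(\bar{C}_s)=2g-1$. Finally recall from \cite{BNR} that $L\mapsto \pi_*L$ identifies $h^{-1}(s)\cap T^*\mathcal{M}_C^s(\delta)$ with an open dense subset of $\text{Prym}(C_s/C)$, the Higgs field on $\pi_*L$ being induced by multiplication by $\lambda$.

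The key step is that this identification is equivariant. From $\pi\circ\tilde\iota=\iota\circ\pi$ and $\tilde\iota^2=\iota^2=\text{id}$ one gets $\pi_*(\tilde\iota^{*}L)\cong\iota^{*}(\pi_*L)$, and since $\tilde\iota^{*}\lambda=\lambda$ this isomorphism respects the Higgs fields; hence it intertwines $L\mapsto\tilde\iota^{*}L$ on $\text{Prym}(C_s/C)$ with the involution $(E,\phi)\mapsto(\iota^{*}E,\iota^{*}\phi)$ on $T^*\mathcal{M}_C^s(\delta)$, which is exactly the codifferential of $E\mapsto\iota^{*}E$. (The other lift would give $(E,\phi)\mapsto(\iota^{*}E,-\iota^{*}\phi)$; as $\tau$ acts by $-1$ on $\text{Prym}(C_s/C)$ the two lifts cut out the two complementary eigen-subvarieties, so it is essential to use $\tilde\iota$.) Combining this with Remark \ref{ER0}, the fiber of $h\colon T^*U\to W$ over $s$ equals $\bigl(h^{-1}(s)\cap T^*\mathcal{M}_C^s(\delta)\bigr)^{\iota}$, which under $L\mapsto\pi_*L$ is an open subset of $\text{Prym}(C_s/C)^{\tilde\iota}$.

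To finish, the fixed locus of $\tilde\iota$ on the torsor under the abelian variety $\text{Prym}(C_s/C)$ is a disjoint union of translates of the abelian subvariety $P:=\text{Prym}(C_s/C)^{\tilde\iota,0}$, which by the eigenspace description coincides with the image $q^{*}J(\bar{C}_s)$ and so is isogenous to $J(\bar{C}_s)$; in particular $\dim P=g(\bar{C}_s)=2g-1=\dim W=\dim U$, consistent with $h\colon T^*U\to W$ being a surjection of the dimensions given in Proposition \ref{EP1}. Hence every connected component of a general fiber of $h\colon T^*U\to W$ is an open subset of a translate of $P$, that is, of an abelian variety. The main obstacle is the equivariance of the spectral correspondence together with the identification of the correct lift $\tilde\iota$; granted that, the conclusion is the standard structure theory of fixed loci of involutions on abelian varieties.
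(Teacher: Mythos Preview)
Your proposal is correct and follows essentially the same route as the paper: both identify a fixed-point-free lift of $\iota$ to the spectral curve $C_s$ (the paper's $\tau\circ\eta$ is exactly your $\tilde\iota$), take the \'etale quotient $\bar C_s$ of genus $2g-1$, and realize the fiber of $h\colon T^*U\to W$ over $s$ as an open subset of the corresponding invariant locus in $\mathrm{Prym}(C_s/C)$, isogenous to $J(\bar C_s)$.

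The differences are in emphasis rather than substance. You are more careful to single out the lift with $\tilde\iota^{*}\lambda=\lambda$ and to prove equivariance of the spectral correspondence with the Higgs field, so that the identification $h^{-1}(s)\cap T^*U \cong (\text{open in }\mathrm{Prym}(C_s/C)^{\tilde\iota})$ follows directly and you can finish with the general structure theory of fixed loci of involutions on abelian varieties. The paper instead first shows $J_{C_s}^{\tau\circ\eta}\subset \mathrm{Prym}(C_s/C)$ via the norm map (using that $\iota$ acts as $-1$ on $J_C$, so the $\iota$-fixed points there are finite), pushes forward to produce $\iota$-invariant bundles landing in $U$, and then closes up the identification by a dimension count (``Observation~O(1)''). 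The one point you pass over lightly is precisely this containment $q^{*}J(\bar C_s)\subset \mathrm{Prym}(C_s/C)$; the paper's norm-map argument supplies it, though for the bare statement of the proposition your appeal to fixed-locus structure plus the dimension check from Proposition~\ref{EP1} already suffices.
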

\begin{proof}

Let $s$ be a general element in $W$. Let $C_s$ be the spectral curve associated to $s$. Then one can easily see that the  genus of  $C_s$ is $4g-3$. On $C_s$ there are two involutions,  one which
changes the sheets of the cover, call it $\tau$ and another is a lift of the hyperelliptic involution on
$C$ (non-canonical), call it $\eta$. Then we will get a fixed point free $\mathbb{Z}_2$ action given by
$\tau \circ \eta$. The quotient  $\tilde{C_s}:= C_s/(\tau \circ \eta)$ is a curve of
genus $2g-1$. Let $\psi: C_s \to \tilde{C_s}$ be the quotient map. Then $\psi$ is unramified. Thus, by \cite[Proposition 3.3]{RR}, $\psi^*(J_{\tilde{C_s}}) \simeq J_{C_s}^{\tau \circ \eta}$, where $J_C$ denotes the Jacobian of the curve.  
On the other hand, we have the following commutative diagram:
 \begin{center}
\begin{equation}\label{EE0}
\begin{tikzcd}
C_s \arrow[r] \arrow[d] 
 & C \arrow[d] \\
 C_s\arrow[r]
 & C
 \end{tikzcd}
 \end{equation}
\end{center}
where the vertical maps are $\tau \circ \eta$ and $\iota$ respectively and the horizontal maps are the spectral covering maps $\pi$.\\
Note that if, $\xi \in J_{C_s}^{\tau \circ \eta}$ then from the commutative diagram \ref{EE0},  $\text{Nm}(\xi)$ is $\iota$-fixed. Since there are only finitely many $\iota-$fixed point in $J_C$ and $J_{C_s}^{\tau \circ \eta}$, is connected, $\text{Nm}(J_{C_s}^{\tau \circ \eta})$ is constant. Hence  $J_{C_s}^{\tau \circ \eta} \subset \text{Prym}(C_s/C)$. 
Let $\xi \in J(\tilde{C_s})$, then, $(\psi^* \xi)$ is $(\tau \circ \eta)-$fixed. Hence $(\tau \circ \eta)_*\psi^*\xi= \psi^*\xi$. Now, from the  commutative diagram \ref{EE0}, we have $\iota_*(\pi_*(\psi^* \xi))= \pi_*{(\tau \circ \eta)}_*(\psi^* \xi)$. Now, since, ${(\tau \circ \eta)}_*(\psi^* \xi)= \psi^* \xi$, we have $\iota_*(\pi_*(\psi^* \xi))= \pi_*(\psi^* \xi)$.
Thus, $\pi_*(\psi^* \xi)$ is an $\iota$-invariant vector bundle of rank $2$ on $C$ and for a general $\xi, \pi_*(\psi^*\xi)$ is stable.  Then the proposition follows from the following observation. 
\end{proof}

{\bf Observation O(1)}:\\
Let $s$ be a $\iota-$fixed section of $K^2$ and $\pi: C_s \to C$ be the associated spectral cover of $C$. Then $h^{-1}(s)$ is isomorphic to $P:=\text{Prym}(C_s/C)$. We also have the forgetful map, $\pi_*: P \to \mathcal{M}_C^s(\delta)$ which is known to be finite.
Let $\tilde{P}:= P \cap T^*\mathcal{M}_C^s(\delta)$. Note that $U$ is closed in $\mathcal{M}_C^s(\delta)$, thus $\pi_*^{-1}(U)$ is closed in $\tilde{P}$.\\ Let us assume that, there is a closed subset $A \subset \tilde{P}$ of dimension $= \text{dim}(U)$, such that $\pi_*(A)$ intersect $U$ in an open set. Then $A= \pi_*^{-1}(U)$.\\
Note that $h^{-1}(s) \cap T^*U \subset \pi_*^{-1}(U)$. By Proposition \ref{EP1},  $\text{dim}(h^{-1}(s) \cap T^*U)= \text{dim}(U)= \text{dim}(\pi_*^{-1}(U)$. Thus $\pi_*^{-1}(U)= A= h^{-1}(s) \cap T^*U$. Therefore, if $\xi \in A$, then the Higgs field $\phi: \pi_*\xi \to \pi_*\xi \otimes K$ is $\iota-$fixed. 

By Proposition \ref{EP1} and \ref{EP2}, we have the following theorem.
\begin{theorem}\label{ET1}
  $T^*U$  is algebraically completely integrable system. 
\end{theorem}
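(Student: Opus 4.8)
The plan is to assemble the statement from Proposition \ref{EP1} and Proposition \ref{EP2}, the only additional point requiring comment being the linearity of the commuting Hamiltonian vector fields on the general fibres. By Proposition \ref{EP1}, $T^*U$ with the induced symplectic form, equipped with a basis $f_1,\dots,f_n$ of $W$ as Poisson-commuting and generically independent Hamiltonians, is a completely integrable Hamiltonian system whose moment map is $h\colon T^*U \to W$; by Proposition \ref{EP2} a general fibre of $h$ is an open subset of an abelian variety. It therefore remains to check that the Hamiltonian vector fields $X_{f_1},\dots,X_{f_n}$ are linear, i.e.\ translation-invariant, on these abelian varieties.

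First I would record the compatibility of the $T^*U$-system with the ambient Hitchin system. The involution $\iota$ of $\mathcal{M}_C^s(\delta)$ lifts to $T^*\mathcal{M}_C^s(\delta)$ as the cotangent lift, which preserves the canonical symplectic form; hence the fixed locus $T^*U$ is a symplectic submanifold, carrying exactly the symplectic form used in Proposition \ref{EP1}. Since the Hitchin map $h\colon T^*\mathcal{M}_C^s(\delta)\to H^0(K^2)$ is $\iota$-equivariant and each $f_i\in W=H^0(K^2)^{\iota}$, the function $f_i$, viewed on $T^*\mathcal{M}_C^s(\delta)$, is $\iota$-invariant; its Hamiltonian vector field on $T^*\mathcal{M}_C^s(\delta)$ is then $\iota$-invariant, hence tangent to $T^*U$, and its restriction to $T^*U$ is precisely the Hamiltonian vector field of $f_i|_{T^*U}$ for the restricted form. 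Thus the commuting vector fields of the system $h\colon T^*U\to W$ are the restrictions of the commuting vector fields of the $\mathrm{SL}_2$ Hitchin system on $T^*\mathcal{M}_C^s(\delta)$.

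Next I would invoke that $T^*\mathcal{M}_C^s(\delta)$ is itself algebraically completely integrable: a general Hitchin fibre is an open subset of $\mathrm{Prym}(C_s/C)$ and the Hitchin Hamiltonians flow linearly there. Fix a general $s\in W$; by Observation O(1) together with the computation in the proof of Proposition \ref{EP2}, the fibre $h^{-1}(s)\cap T^*U$ is an open subset of the abelian subvariety $J_{C_s}^{\tau\circ\eta}\cong J(\tilde{C_s})$ sitting inside $\mathrm{Prym}(C_s/C)$, the latter being (the closure of) the ambient Hitchin fibre over $s$. The ambient Hamiltonian vector fields lie in $\ker dh$, hence are tangent to this Hitchin fibre, and by the previous paragraph they are tangent to $T^*U$, hence to the intersection $J_{C_s}^{\tau\circ\eta}$; a translation-invariant vector field on $\mathrm{Prym}(C_s/C)$ that is tangent to an abelian subvariety restricts to a translation-invariant vector field on that subvariety. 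Therefore the $n$ commuting Hamiltonian vector fields of $h\colon T^*U\to W$ are linear on the general fibre, which is an open subset of an abelian variety, and $T^*U$ is algebraically completely integrable.

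The main obstacle I expect is the bookkeeping in the last paragraph: one must verify that the identification of a general fibre of $h|_{T^*U}$ with an open subset of $J_{C_s}^{\tau\circ\eta}$ is compatible with the group structure used on the ambient Hitchin fibre $\mathrm{Prym}(C_s/C)$, so that ``linear on the Prym, tangent to the subvariety'' genuinely yields ``linear on the subvariety''; one must also confirm that a general $s\in W$ is general enough in $H^0(K^2)$ for the ambient linearity statement to apply, and that the spectral curve $C_s$ and its involutions $\tau,\eta$ behave as described there. These points are not deep, but this is where the argument has to be made precise.
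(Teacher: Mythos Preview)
Your proposal is correct and follows the same route as the paper: the paper's entire proof of Theorem~\ref{ET1} is the one-line remark ``By Proposition~\ref{EP1} and~\ref{EP2}, we have the following theorem,'' and you reproduce exactly this combination. In fact you are more careful than the paper: the paper's own definition of algebraic complete integrability requires the Hamiltonian vector fields to be linear on the abelian fibres, yet the paper never verifies this, whereas you supply a clean argument by restricting the linear Hitchin flows from the ambient $T^*\mathcal{M}_C^s(\delta)$ to the $\iota$-fixed locus and to the abelian subvariety $J_{C_s}^{\tau\circ\eta}\subset\mathrm{Prym}(C_s/C)$.
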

Let us consider the co-prime case. 
\begin{proposition}\label{EP0}
Let $\delta= L_\omega$. Let $W(\omega)$ be the subspace of, $W$ consisting the sections vanishing at $\omega$. Then $h^{-1}(W(\omega) \cap T^*U = T^*U$.  
\end{proposition}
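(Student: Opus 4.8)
The plan is to prove the equivalent assertion that the Hitchin map carries all of $T^{*}U$ into $W(\omega)$; that is, for every $\iota$-fixed Higgs pair $(E,\phi)$ with $E\in U$ and $\det E=L_{\omega}$, the quadratic differential $h(E,\phi)=\det\phi\in H^{0}(C,K^{2})$ is both $\iota$-invariant and vanishes at the Weierstrass point $\omega$. The $\iota$-invariance follows from the $\iota$-equivariance of $\phi$ exactly as in the proof of Proposition \ref{EP1} (the canonical linearization used on $K^{2}$ is the square of the one on $K$, so the sign ambiguity disappears), so the real content is the vanishing at $\omega$. I would deduce it from the sharper statement that $\phi$ itself vanishes at $\omega$, i.e.\ $\phi(\omega)=0$ in $\operatorname{End}(E_{\omega})\otimes K_{\omega}$.

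First I would fix, for $E\in U$, an $\iota$-linearization $\tilde\iota\colon\iota^{*}E\xrightarrow{\sim}E$. Since $E$ is stable, $\operatorname{Aut}(E)=\mathbb{C}^{*}$, and as $H^{2}(\mathbb{Z}/2,\mathbb{C}^{*})=0$ (because $\mathbb{C}^{*}$ is divisible) the cocycle obstruction vanishes, so $\tilde\iota$ may be normalised with $\tilde\iota\circ\iota^{*}\tilde\iota=\operatorname{id}_{E}$. Restricting to the fibre over the fixed point $\omega$ produces an involution $\sigma_{\omega}:=\tilde\iota|_{\omega}$ of $E_{\omega}$, and $\det\sigma_{\omega}$ equals the scalar by which the induced linearization $\det\tilde\iota$ acts on $(\det E)_{\omega}=(L_{\omega})_{\omega}$. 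Because $\omega$ is a ramification point of $\pi\colon C\to\mathbb{P}^{1}$, the differential $d\iota$ acts on $T_{\omega}C$, and hence on $K_{\omega}=T^{*}_{\omega}C$, by $-1$; writing out the $\iota$-equivariance of $\phi$ at $\omega$ with respect to $\tilde\iota$ and this canonical linearization of $K$ gives, after trivialising $K_{\omega}$, the relation $\sigma_{\omega}\,\phi(\omega)=-\,\phi(\omega)\,\sigma_{\omega}$ in $\operatorname{End}(E_{\omega})$. In particular, once $\sigma_{\omega}$ is known to be a scalar, anticommutation forces $\phi(\omega)=0$ and we are done.

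The crux, and the step I expect to be the main obstacle, is proving that $\sigma_{\omega}$ is scalar for every $E\in U$. Since $\sigma_{\omega}^{2}=\operatorname{id}$, this is equivalent to $\det\sigma_{\omega}=+1$; but $L_{\omega}=\mathcal{O}_{C}(\omega)$ has exactly two $\iota$-linearizations, the natural one $\nu$ (which acts on $(L_{\omega})_{\omega}$ by $-1$, via the local generator $1/t$ with $\iota^{*}t=-t$, and trivially at the other Weierstrass points) and its twist by $(\mathcal{O}_{C},-1)$ (which acts on $(L_{\omega})_{\omega}$ by $+1$), and $\det\tilde\iota$ is one of these. So I must exclude $\det\tilde\iota=\nu$. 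In that case $\det\sigma_{w}=+1$, hence $\sigma_{w}=\pm\operatorname{id}$, at every Weierstrass point $w\neq\omega$, whereas $\sigma_{\omega}$ has eigenvalues $\{1,-1\}$; translating $E$ into the associated parabolic bundle on $\mathbb{P}^{1}$ under the $\mathbb{Z}/2$-orbifold correspondence, this means the parabolic bundle has a single parabolic point $\pi(\omega)$ (of weight $\tfrac12$) and underlying bundle of degree $0$. Parabolic stability applied to the degree-maximal sub-line-bundle then forces the underlying bundle to be $\mathcal{O}_{\mathbb{P}^{1}}^{\oplus2}$, but the constant sub-line-bundle through the flag line at $\pi(\omega)$ has parabolic slope $\tfrac12$ against the parabolic slope $\tfrac14$ of the whole bundle, contradicting stability. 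Hence no $E\in U$ can have $\det\tilde\iota=\nu$, so $\det\sigma_{\omega}=+1$ and $\sigma_{\omega}=\pm\operatorname{id}_{E_{\omega}}$. (Alternatively, one may invoke the description of the fixed locus $U$ and of its cotangent spaces from \cite{abx,JD}, which encodes the same fact about the distinguished Weierstrass point.)

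Putting the pieces together: for $(E,\phi)\in T^{*}U$ we have $\sigma_{\omega}=\pm\operatorname{id}_{E_{\omega}}$ and $\sigma_{\omega}\phi(\omega)=-\phi(\omega)\sigma_{\omega}$, whence $\phi(\omega)=0$, so $\det\phi$ vanishes at $\omega$; being also $\iota$-invariant, $\det\phi\in W(\omega)$. Therefore $h(T^{*}U)\subseteq W(\omega)$, i.e.\ $h^{-1}(W(\omega))\cap T^{*}U=T^{*}U$, as claimed.
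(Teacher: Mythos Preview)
Your argument is correct and takes a genuinely different route from the paper's. The paper never shows directly that $h(T^{*}U)\subset W(\omega)$. Instead it works backwards from the spectral side: for a general $s\in W(\omega)$ the spectral curve $C_{s}$ is nodal over $\omega$; passing to the normalisation $C_{s}^{\nu}$ and analysing the involutions $\tau,\eta$ and the quotient by $\tau\circ\eta$, the paper identifies a $(2g-2)$-dimensional family inside $\operatorname{Prym}(C_{s}/C)$ whose pushforwards are $\iota$-fixed stable bundles with determinant $L_{\omega}$. A dimension count then shows that $h^{-1}(W(\omega))\cap T^{*}U$ is a closed subset of $T^{*}U$ of full dimension, hence all of $T^{*}U$. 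Your approach is purely local at the Weierstrass point: from the anticommutation $\sigma_{\omega}\phi(\omega)=-\phi(\omega)\sigma_{\omega}$ you deduce the sharper fact $\phi(\omega)=0$, once $\sigma_{\omega}$ is scalar. This is more conceptual and explains the geometric reason for the constraint, whereas the paper's spectral analysis, though indirect here, already contains the description of the general fibre used immediately afterwards in Proposition~\ref{EP-1}.

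One remark on your ``main obstacle'': the parabolic/orbifold argument you sketch is correct in spirit, but the bookkeeping of degrees under the $\mathbb{Z}/2$-correspondence is delicate and your claim that the underlying bundle has degree~$0$ needs justification. A cleaner way to see that $\sigma_{\omega}$ is scalar for every $E\in U$ is the holomorphic Lefschetz fixed-point formula applied to $\operatorname{End}_{0}(E)\otimes K$: if $a$ denotes the number of Weierstrass points at which $\sigma_{w}$ is scalar, one finds $\dim H^{0}(\operatorname{End}_{0}(E)\otimes K)^{\iota}=2g-1-a$. Since $U$ is smooth with $\dim U=2g-2$, this forces $a=1$; and because the determinant linearisation already pins down \emph{which} point is scalar (namely $\omega$, as you observed via the two linearisations $\pm\nu$ on $L_{\omega}$), one gets $\sigma_{\omega}=\pm\operatorname{id}$ without invoking parabolic stability.
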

\begin{proof}
Let us consider a general $\iota-$fixed section $s$ vanishing at $\omega$.  Since the $\iota-$fixed sections of $K^2$ are pull back of sections of a line bundle of degree $2g-2$ on $\mathbb{P}^1, s$ vanishes at $\omega$ with multiplicity $2$. Therefore, the spectral curve $C_s$ is singular with a node at $x$, where $x= \pi^{-1}(\omega)$. \\
 Let $\nu: C_s^\nu \to C_s$ be the normalization of, $C_s$
  Then we have the following commutative diagram:
  \begin{center}
  \begin{tikzcd}[column sep=small]
C_s^\nu \arrow{r}{\nu}  \arrow{rd}{\pi^\nu} 
  & C_s \arrow{d}{\pi} \\
    & C.
\end{tikzcd}
\end{center}
Let $\nu^{-1}(x)= \{x_1, x_2\}$ and let $\nu^*: J_{C_s} \to J_{C_s^\nu}$ be the pullback map. 
The pullback map $\nu^*$ in fact induces a map from $\text{Prym}(C_s/C) \to \text{Prym}(C_c^\nu/C)$.\\
Let $\tau$ be the involution on $C_s^\nu$ which changes the sheets. There is another involution,  $\eta$ in $C_s^\nu$  which is a lift (non-canonical) of the hyperelliptic involution. Then the involution $\tau \circ \eta$ has two fixed points, namely $x_1$ and $x_2$.
Then $\tau \circ \eta$ also induces an involution $\sigma$ on $C_s$ such that 
the  following  diagrams commute: 
\begin{center}
  \begin{equation}\label{EE-2}
 \begin{tikzcd}
 C_s^\nu  \arrow{r}{\pi^\nu}  \arrow[swap]{d}{\tau \circ \eta} 
   & C \arrow{d}{\iota} \\
   C^\nu_s \arrow{r}{\pi^\nu}
   & C
  \end{tikzcd}
  \end{equation}
 \end{center}

\begin{center}
  \begin{equation}\label{EE-3}
 \begin{tikzcd}
 C_s^\nu  \arrow{r}{\nu}  \arrow[swap]{d}{\tau \circ \eta} 
   & C_s \arrow{r}{\pi} \arrow{d}{\sigma} & C \arrow{d}{\iota} \\
   C^\nu_s \arrow{r}{\nu}
   & C_s \arrow{r}{\pi} & C.
  \end{tikzcd}
  \end{equation}
 \end{center}

Let
\[
f: C_s^\nu \to \tilde{C_s^\nu}= C_s^\nu/(\tau \circ \eta) 
\]
and 
\[
g: C_s \to \tilde{C_s}= C_s/(\sigma) 
\]
be the quotient maps. 
Then we have the following commutative diagram:
\begin{center}
  \begin{equation}\label{EE-4}
 \begin{tikzcd}
 C_s^\nu  \arrow{r}{\nu}  \arrow[swap]{d}{f} 
   & C_s \arrow{d}{g} \\
   \tilde{C^\nu_s} \arrow{r}{\tilde{\nu}}
   & \tilde{C}_s
  \end{tikzcd}
  \end{equation}
 \end{center}
By \cite[Proposition 3.1]{RR}, $J_{C_c^\nu}^{\tau \circ \eta} = f^*(J_{\tilde{C_s^\nu}}) + P[2]$, where $P[2]$ denotes the order two elements in $\text{Prym}(C_s^\nu/\tilde{C_s^\nu})$. Since $f$ is ramified at two points, again by \cite[P.11]{RR}, we have $P[2]=f^*J_{\tilde{C_s^\nu}}[2]$. Thus $J_{C_c^\nu}^{\tau \circ \eta} = f^*(J_{\tilde{C_s^\nu}})$. Similarly, $J_{C_s}^{\sigma}= g^*(J_{\tilde{C_s}})$ and they are contained in $\text{Prym}(C_s^\nu/C)$ and, $\text{Prym}(C_s/C)$ respectively. From the commutative diagram \ref{EE-4}, we have $f^*\tilde{\nu}^*(J_{\tilde{C_s}})= \nu^*g^*(J_{\tilde{C_s}})$. 
 Let  $A:= \{\xi \in g^*(J_{\tilde{C_s}}),  \text{ such that } \pi_*\xi \text{ stable }\}$. Then $A$ is a non-empty open subset. Note that the dimension of $A= 2g-1= \text{dim}(U)$, where determinant of the bundles in $U$ are trivial. Hence, by the observation $O(1)$, we have $A= h^{-1} \cap T^*U$. \\
 Let $L \in J_{C_s^\nu}^{\tau \circ \eta}$ and $\xi \in (\nu^*)^{-1}(L)$. Then we have the exact sequence
\[
0 \to \xi \to \nu_*\nu^* \xi \to \mathbb{C}_x \to 0.
\]
After taking the direct image we get,
\[
0 \to \pi_*\xi \to \pi_*(\nu_*\nu^* \xi) \to \mathbb{C}_{\omega} \to 0.
\]
Since $\pi \circ \nu= \pi^\nu,\pi_*(\nu_*\nu^* \xi)= \pi^\nu_*L$, which gives $\text{det}(\pi_*\xi)= \text{det}(\pi^\nu_*L) \otimes {L_{\omega}}^{-1}$. Since the Higgs field, associated to $\xi$, is $\iota-$fixed, the Higgs field associated to $L$ is also $\iota-$fixed. 
Thus, if $\pi_*^\nu L)$ is stable, then it gives a point in $T^*U$, where the determinant of the bundles in $U$ are $L_\omega$. The dimension of $J_{C_s^\nu}^{\tau \circ \eta}$ is $2g-2$ and, for a general element $L \in J_{C_s^\nu}^{\tau \circ \eta}$, is stable, we have $\pi_*^{-1}(U)= h^{-1}(s) \cap T^*U$. \\
Let $W(\omega)$ be the subspace of, $W$ consisting the sections which vanish at $\omega$. Then $h^{-1}(W(\omega)) \cap T^*U$ is a closed subset of $T^*U$ and has the same dimension as that of $T^*U, h^{-1}(W(\omega)) \cap T^*U= T^*U$.
\end{proof}

\begin{proposition}\label{EP-1}
  Let $\delta =L_\omega$. Then $T^*U$ is again an algebraically  completely integrable  system.
\end{proposition}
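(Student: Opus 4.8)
The plan is to repeat, in the coprime setting, the same two-step argument that produced Theorem~\ref{ET1} in the trivial-determinant case: first show that $T^*U$ is a completely integrable Hamiltonian system, and then show that a general fibre of the associated moment map is an open subset of an abelian variety on which the flows are linear. Most of the geometric input has already been assembled in Proposition~\ref{EP0} and its proof, so the work here is largely one of bookkeeping.

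First I would check that $T^*U$ is a completely integrable Hamiltonian system, mimicking the proof of Proposition~\ref{EP1}. Equip $T^*U$ with the symplectic form restricted from $T^*\mathcal{M}_C^s(L_\omega)$. By Proposition~\ref{EP0} the Hitchin map restricts to a surjective morphism $h\colon T^*U \to W(\omega)$, and since $\dim U = \dim W(\omega) = 2g-2$ one has $\dim T^*U = 2\dim W(\omega)$. A basis of $W(\omega)$ then yields functions on $T^*U$ that Poisson-commute, being restrictions of the Poisson-commuting Hitchin Hamiltonians, and that are generically independent because $h$ is dominant onto a space of the right dimension.

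Next I would describe the general fibre. Fix a general $s \in W(\omega)$. As in the proof of Proposition~\ref{EP0}, the spectral curve $C_s$ has a single node at $x = \pi^{-1}(\omega)$; its normalization $C_s^\nu$ has geometric genus $4g-4$, the involution $\tau\circ\eta$ on $C_s^\nu$ has exactly the two fixed points $x_1, x_2$, and $\tilde{C_s^\nu} = C_s^\nu/(\tau\circ\eta)$ has genus $2g-2$, so $J_{C_s^\nu}^{\tau\circ\eta} = f^*(J_{\tilde{C_s^\nu}})$ is an abelian variety of dimension $2g-2$. From that same proof, the open locus $A \subset J_{C_s^\nu}^{\tau\circ\eta}$ of line bundles $L$ with $\pi^\nu_*L$ stable is non-empty, and $L \mapsto \pi^\nu_*L$ equipped with its $\iota$-fixed Higgs field identifies $A$ with $h^{-1}(s) \cap T^*U = \pi_*^{-1}(U)$. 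Hence a general fibre of $h\colon T^*U \to W(\omega)$ is an open subset of an abelian variety, and the Hamiltonian vector fields are linear there because they are the restriction of the linear Hitchin flows on the Prym fibres inside $\mathcal{M}_H$. Combining this with the previous paragraph yields the proposition.

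The step I expect to be the main obstacle is making the fibre identification watertight: one must know that for general $s \in W(\omega)$ the node at $x$ is the only singularity of $C_s$, that the $2$-torsion corrections in the Prym computations of \cite{RR} cancel (so that $J_{C_s^\nu}^{\tau\circ\eta}$ is connected of the expected dimension), and --- most delicately --- that $\pi_*^{-1}(U)$ has dimension exactly $\dim U$, so that the dense open subset $A$ actually exhausts the fibre rather than sitting inside a larger component. Each of these points is effectively settled within the proof of Proposition~\ref{EP0}, so what remains is only to record that they combine into the algebraic complete integrability of $T^*U$.
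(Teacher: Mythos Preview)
Your proposal is correct and follows essentially the same approach as the paper: invoke Proposition~\ref{EP0} to get the restricted Hitchin map $h\colon T^*U\to W(\omega)$, repeat the argument of Proposition~\ref{EP1} for complete integrability using the dimension match $\dim U=\dim W(\omega)=2g-2$, and read off from the proof of Proposition~\ref{EP0} that the general fibre is an open subset of the abelian variety $J_{C_s^\nu}^{\tau\circ\eta}$. The paper's own proof is a terse two-line summary of exactly this outline; your version simply unpacks the fibre identification and the genus counts more explicitly.
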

\begin{proof}
By proposition \ref{EP0}, the restriction of the Hitchin map to $T^*U$, induces  a map $h: T^*U \to W(\omega)$.  Note that $U$ and $W(\omega)$ have the same dimension and $h$ is surjective. Therefore, as in the trivial determinant case, we have $T^*U$ is completely integrable Hamiltonian system.
From the proof of the proposition \ref{EP0}, it follows that for a general element $s \in W(\omega)$, the fiber $h^{-1}(s)$ is isomorphic to an open subset of an abelian variety. Therefore, $T^*U$ is algebraically completely integrable system.
\end{proof}

\begin{theorem}
Let $\delta$ be the trivial line bundle on $C$. If there exist a smooth compactification $X$ of $U$ such that co-dimension of $X \setminus U$ is at least, $2$ then  $T^*X$ is algebraically completely integrable system.
\end{theorem}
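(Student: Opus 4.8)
The plan is to transport the algebraically completely integrable structure from $T^*U$ (Theorem~\ref{ET1}) to $T^*X$ using the codimension hypothesis, by the same kind of Hartogs/density arguments used elsewhere in the paper. Write $Z:=X\setminus U$, $n:=\dim X=\dim U=2g-1$, and recall that $W=H^0(K_C^2)^{\iota}$ has dimension $n$ and that the (restricted) Hitchin map $h\colon T^*U\to W$ is the moment map making $T^*U$ algebraically completely integrable.

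First I would extend the moment map. Pulling back a basis of $W^{\vee}$ along $h$ produces the Poisson-commuting functions $f_1,\dots,f_n$ on $T^*U$, and by the discussion at the start of Section~\ref{S2} each $f_i$ is the function attached to a section of $S^{d_i}TU$ over $U$. Since $X$ is smooth and $\operatorname{codim}_X Z\ge 2$, the sheaves $S^{d_i}TX$ are locally free, so Hartogs' extension theorem gives $H^0(U,S^{d_i}TU)=H^0(X,S^{d_i}TX)$ and each $f_i$ extends to a regular function on $T^*X$, homogeneous in the fibres; these assemble to a morphism $\tilde h\colon T^*X\to W$ restricting to $h$ on the dense open subset $T^*U\subset T^*X$. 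The canonical symplectic form of $T^*X$ restricts to that of $T^*U$ along $U$, hence the regular functions $\{f_i,f_j\}$ vanish identically on $T^*X$ (they vanish on the dense open $T^*U$) and $df_1\wedge\cdots\wedge df_n$ is generically nonzero on $T^*X$ for the same reason. Thus $T^*X$ is a completely integrable Hamiltonian system with moment map $\tilde h$, and the Hamiltonian vector fields $X_{f_1},\dots,X_{f_n}$ are globally defined regular, pairwise commuting vector fields on $T^*X$.

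It remains to identify a general fibre of $\tilde h$. For general $s\in W$ the fibre $h^{-1}(s)=\tilde h^{-1}(s)\cap T^*U$ is, by Propositions~\ref{EP1} and \ref{EP2} together with Theorem~\ref{ET1}, a dense open subset of an abelian variety $A_s$ of dimension $n$ on which the flows of the $X_{f_i}$ are linear. Since $\tilde h^{-1}(s)$ is cut out by $n$ equations in the smooth $2n$-fold $T^*X$, each of its components has dimension $\ge n$; on the other hand $\tilde h^{-1}(s)\setminus T^*U$ is contained in $T^*X|_Z$, whose dimension is $\le\dim Z+n\le 2n-2$, so for general $s$ one has $\dim\bigl(\tilde h^{-1}(s)\setminus T^*U\bigr)\le n-2$. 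Therefore $h^{-1}(s)$ is dense in $\tilde h^{-1}(s)$ with complement of codimension $\ge 2$, and $\tilde h^{-1}(s)$ is smooth of pure dimension $n$ by generic smoothness. The inclusion $h^{-1}(s)\hookrightarrow A_s$ is then a rational map from the smooth variety $\tilde h^{-1}(s)$ to an abelian variety, hence a morphism $\phi_s\colon\tilde h^{-1}(s)\to A_s$; one verifies, using that $A_s$ contains no rational curve and that $\tilde h^{-1}(s)\setminus h^{-1}(s)$ has codimension $\ge2$, that $\phi_s$ is quasi-finite and birational onto its image, so by Zariski's main theorem it identifies $\tilde h^{-1}(s)$ with an open subvariety of $A_s$. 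Since the $X_{f_i}$ are regular on $T^*X$, tangent to $\tilde h^{-1}(s)$, and linear on the dense open $h^{-1}(s)$, they are linear on $\tilde h^{-1}(s)$, which gives the algebraic complete integrability of $T^*X$.

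The main obstacle is precisely this last step: controlling the part of a general fibre of $\tilde h$ lying over the boundary $Z=X\setminus U$ and showing that the extended map $\phi_s$ is an open immersion, not merely a dominant birational morphism. This is exactly where the hypothesis $\operatorname{codim}_X(X\setminus U)\ge 2$ enters — it forces the "new" locus $\tilde h^{-1}(s)\setminus h^{-1}(s)$ to have codimension $\ge 2$, which, combined with the absence of rational curves on $A_s$ (or, alternatively, with an application of Zariski's main theorem to the normalization of the closure of $h^{-1}(s)$ inside $\mathbb{P}(T^*X\oplus\mathcal{O}_X)$), prevents the compactification from introducing contracted subvarieties or extra branches that would destroy the abelian-variety structure of the general fibre. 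Everything else — Hartogs extension of the Hitchin sections, transfer of Poisson-commutativity and of generic independence, and linearity of the flows — is a routine density argument.
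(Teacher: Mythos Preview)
Your approach is essentially the paper's: extend the Hitchin Hamiltonians to $T^*X$ via Hartogs using $\operatorname{codim}_X(X\setminus U)\ge 2$, and then transport Poisson commutativity and generic independence by density from $T^*U$. The paper's own proof is in fact much shorter than yours --- after extending the functions it simply invokes Proposition~\ref{EP2} for the general-fibre condition and concludes, without your careful analysis of $\tilde h^{-1}(s)\setminus h^{-1}(s)$ or the extension $\phi_s$ into $A_s$; so the obstacle you flag (quasi-finiteness of $\phi_s$) is real, but it is one the paper itself glosses over rather than resolves.
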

\begin{proof}
 Since $U$ is isomorphic to an open subset of $X, T^*U$ is also isomorphic to an open subset of $T^*X$. As $X \setminus U$ has co-dimension at least $2, T^*X \setminus T^*U$ also has co-dimension at least $2$. Thus, the functions defining  $h$ in the proof of the Proposition \ref{EP1} extends to, $T^*X$ which are Poisson commuting and generically independent. By Proposition \ref{EP2}, a general fiber of the map $h$ is isomorphic to an open subset of an abelian variety. Hence, $T^*X$ is algebraically completely integrable system.    
\end{proof}
\begin{remark}
I feel that $U$ will always have a smooth compactification $X$ with the property as in the previous theorem, where $X$ is isomorphic to a smooth intersection of two quadrics in $\mathbb{P}^{2g+1}$.
\end{remark}
\begin{theorem}
If $ \delta = L_\omega$, then $U$ is isomorphic to the variety of $(g-2)-$planes in a smooth intersection of two quadrics in $\mathbb{P}^{2g}$. In particular, it has dimension $2g-2$.
\end{theorem}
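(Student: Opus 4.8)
The plan is to deduce the statement from the classical correspondence of Desale and Ramanan between stable rank-two bundles of odd determinant on a hyperelliptic curve and linear subspaces of an intersection of two quadrics, and then to trace the hyperelliptic involution through that correspondence. Write $C\colon y^{2}=\prod_{i=1}^{2g+2}(x-a_{i})$; after a Möbius change of coordinates on the base we may assume the Weierstrass point $\omega$ corresponds to one of the $2g+2$ branch points. Attach to $C$ a general pencil $\langle Q_{0},Q_{\infty}\rangle$ of quadrics in $\mathbb{P}^{2g+1}$ whose $2g+2$ singular members lie over the branch points, so that $\widetilde{X}:=Q_{0}\cap Q_{\infty}\subset\mathbb{P}^{2g+1}$ is a smooth complete intersection of dimension $2g-1$. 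By Desale--Ramanan the variety $F_{g-2}(\widetilde{X})$ of $(g-2)$-planes contained in $\widetilde{X}$ is isomorphic to $\mathcal{M}_{C}^{s}(L_{\omega})$, and the dimensions agree: $\dim F_{g-2}(\widetilde{X})=(g-1)(g+3)-2\binom{g}{2}=3g-3$. Choosing a simultaneous diagonalisation of $Q_{0},Q_{\infty}$, the group $(\mathbb{Z}/2)^{2g+1}$ of sign changes $\epsilon_{S}\colon x_{i}\mapsto -x_{i}\ (i\in S)$ acts on $\widetilde{X}$, and the index-two subgroup of sign changes of even cardinality corresponds under the isomorphism to the action of $J(C)[2]$ on $\mathcal{M}_{C}^{s}(L_{\omega})$ by tensoring.

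The next, and decisive, step is to identify the involution $\varphi\mapsto\iota^{*}\varphi$ on $\mathcal{M}_{C}^{s}(L_{\omega})$ (which is well defined because $\iota^{*}L_{\omega}\cong L_{\iota(\omega)}=L_{\omega}$, and whose fixed locus is $U$ by definition) with an automorphism of $\widetilde{X}$. Since $\iota^{*}$ is not a translation by a $2$-torsion line bundle, it must correspond to an automorphism outside the even sign-change subgroup; the claim is that it is precisely $\epsilon_{\omega}$, the single sign change labelled by $\omega$. I would prove this by running the Desale--Ramanan construction $\iota$-equivariantly: a stable $\iota$-invariant $E$ carries a canonical equivariant structure, unique up to sign by stability, so its descent data to $\mathbb{P}^{1}=C/\iota$ are canonically determined, and tracking these data through the construction of the associated linear subspace shows the subspace lies in the coordinate hyperplane $H_{\omega}=\{x_{\omega}=0\}$; conversely the bundle built from a $(g-2)$-plane inside $H_{\omega}$ has manifestly $\iota$-invariant descent data. (Alternatively one compares the two involutions directly on the Hecke/Brill--Noether strata used in Desale--Ramanan's proof.)

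Granting $\iota^{*}\leftrightarrow\epsilon_{\omega}$, the fixed locus is immediate. The fixed point set of $\epsilon_{\omega}$ on $\mathbb{P}^{2g+1}$ is $H_{\omega}\sqcup\{e_{\omega}\}$ with $H_{\omega}\cong\mathbb{P}^{2g}$, and $Q_{0}(e_{\omega})\neq 0$ forces $e_{\omega}\notin\widetilde{X}$, so any $\epsilon_{\omega}$-invariant $(g-2)$-plane of $\widetilde{X}$, being a sum of $\epsilon_{\omega}$-eigenspaces, is forced to lie in $H_{\omega}$. Hence $U\cong F_{g-2}(\widetilde{X})^{\epsilon_{\omega}}=F_{g-2}(\widetilde{X}\cap H_{\omega})$, where $X:=\widetilde{X}\cap H_{\omega}=(Q_{0}|_{H_{\omega}})\cap(Q_{\infty}|_{H_{\omega}})$ is an intersection of two quadrics in $\mathbb{P}^{2g}$. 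This $X$ is smooth: its discriminant pencil has as singular members exactly the $2g+1$ quadrics not lying over $\omega$, which are distinct, and a pencil of quadrics in $\mathbb{P}^{2g}$ with $2g+1$ distinct singular members is a smooth transversal intersection; alternatively one invokes that $U$ is already known to be smooth (\cite{abx}), so the hyperplane section cannot acquire a singularity along $F_{g-2}$. Finally $\dim F_{g-2}(X)=(g-1)(g+2)-2\binom{g}{2}=2g-2$, which is the asserted dimension, and together with Proposition~\ref{EP-1} this supplies the input for Theorem~\ref{IT1}.

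The genuine obstacle is the middle step: pinning down that $\iota^{*}$ matches $\epsilon_{\omega}$ rather than another odd element of the sign-change group, and promoting the set-theoretic description of the fixed locus to an isomorphism of varieties onto the \emph{full} $F_{g-2}(X)$ with its (reduced, here smooth) scheme structure — for this one uses that both $F_{g-2}(\widetilde{X})^{\epsilon_{\omega}}$ and $F_{g-2}(X)\cong U$ are smooth, so a set-theoretic equality upgrades to an equality of schemes. The remaining ingredients — the two dimension counts, smoothness of the hyperplane section, and well-definedness of $\iota^{*}$ on $\mathcal{M}_{C}^{s}(L_{\omega})$ — are routine once the Desale--Ramanan machinery and the cited smoothness of $U$ are in hand.
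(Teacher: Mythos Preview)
Your proposal is correct and follows essentially the same route as the paper: invoke Desale--Ramanan to identify $\mathcal{M}_C^s(L_\omega)$ with $F_{g-2}(\widetilde{X})$ for a smooth intersection of two quadrics $\widetilde{X}\subset\mathbb{P}^{2g+1}$, identify $\iota^{*}$ with the single sign change $\epsilon_{\omega}$, and read off $U\cong F_{g-2}(\widetilde{X}\cap H_{\omega})$. The paper dispatches the step you flag as the ``genuine obstacle'' more directly, by using the explicit Desale--Ramanan coordinates $V=\bigoplus_{w}\det(E\otimes L_{\omega}^{g})_{w}$ and observing that $\iota$ acts as $-\mathrm{Id}$ on the fiber of $L_{\omega}$ over $\omega$ and trivially over the other Weierstrass points, which immediately pins down the involution as the sign change in the $\omega$-coordinate; you instead argue structurally via the automorphism group and equivariant descent, which is longer but perhaps more robust. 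You are also more careful than the paper about smoothness of the hyperplane section and about upgrading the set-theoretic fixed locus to a scheme-theoretic isomorphism, points the paper passes over in silence.
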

\begin{proof}
Let $\xi=L_\omega^g$ be the line bundle of degree $g$ on $C$. Let $E$ be a stable vector bundle with fixed determinant $\delta= L_\omega$. Let $E_\xi:= E \otimes \xi$. Let $V:= \sum {\text{det}(E_\xi)}_\omega$, where the sum is taken over the Weierstrass points $\omega \in C$. Then, by \cite{DR}, $\mathcal{M}_C^s(\delta)$ is isomorphic to the variety of $(g-2)-$planes in a smooth intersection of two quadrics in $\mathbb{P}^{2g+1}= \mathbb{P}(V)$.\\
Note that the hyperelliptic involution acts on the fiber at $\omega$ of $L_\omega$ as $-\text{Id}$. This gives an action on the vector space $V$ which changes the sign at the co-ordinate corresponding to the fiber at $\omega$. The fixed locus of this action is thus a hyperplane $H$ in $\mathbb{P}(V)$. Since $\mathcal{M}_C^s(\delta)$ is isomorphic to the $(g-2)-$planes in $X, U$ is isomorphic to the $(g-2)-$planes in $X \cap H$. \\
\end{proof}
Let $X$ be a smooth intersection of two quadrics in $\mathbb{P}^{2g}$. Let $F_{g-2}(X)$ be  variety  of $(g-2)-$planes in $X$. It is known that, if $g \ge 3$, then $F_{g-2}(X)$ is a projective Fano manifold with Picard number one  \cite[Proposition 3.1]{DM}. Then we have the following theorem:
\begin{theorem}\label{ET-2}
 The co-tangent bundle of the  projective Fano manifold  $F_{g-2}(X)$, is algebraically completely integrable system and if $g \ge 3$ then any non-constant endomorphism  of $F_{g-2}(X)$ is an automorphism.
\end{theorem}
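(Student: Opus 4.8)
The proof is meant to be a synthesis of results already in hand; let me indicate the two ingredients and where the real work sits.

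\emph{Integrability.} For the algebraic complete integrability of $T^*F_{g-2}(X)$ the key point is the identification, proved just above, of $F_{g-2}(X)$ with the $\iota$-fixed locus $U=(\mathcal{M}_C^s(\delta))^{\iota}$ in the case $\delta=L_\omega$. Under this isomorphism $T^*F_{g-2}(X)\cong T^*U$ compatibly with the symplectic structures, so Proposition \ref{EP-1} gives the assertion immediately (this is also Theorem \ref{IT1} read through the same identification). Thus the first half needs no further argument.

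\emph{Reduction for the endomorphism statement.} The plan is to check the hypotheses of Theorem \ref{IT2} for $Y:=F_{g-2}(X)$ and then quote it. By \cite[Proposition 3.1]{DM}, for $g\ge 3$ the variety $Y$ is a smooth Fano manifold of Picard number one; this is the only place $g\ge 3$ is used. We have just seen $T^*Y$ is algebraically completely integrable. The remaining two hypotheses — that $Y$ is not $\mathbb{P}^{2g-2}$ and that $TY$ is not nef — I would obtain together: since the tangent bundle of projective space is ample, hence nef, knowing that $TY$ is not nef already forces $Y\not\cong\mathbb{P}^{2g-2}$, and then Theorem \ref{IT2} applies verbatim and yields that every non-constant endomorphism of $Y$ is an automorphism.

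\emph{The obstacle.} Everything therefore comes down to showing that $T(F_{g-2}(X))$ is not nef, and this is the step I expect to be the real obstacle. It is enough to exhibit one non-free rational curve $\ell\subset Y$ (then $\ell^*TY$ has a negative direct summand), or, by Proposition \ref{S3P3}, it is enough to know that the wobbly locus $\mathcal{W}\subset Y$ is non-empty (then there is a curve $C\subset Y$ along whose normalization $TY$ is not nef). I see two routes. The first is to use the explicit description in \cite{DM} of the minimal rational curves on the variety of linear subspaces of a smooth complete intersection of quadrics and to check that the members of this family lying over the ``special'' locus fail to be free — here one must genuinely verify that freeness breaks, not merely that the curve degenerates. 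The second is to import information from the moduli space: $U$ sits inside $\mathcal{M}_C^s(L_\omega)$, which by \cite{DR} is the variety of $(g-2)$-planes in a smooth intersection of two quadrics in $\mathbb{P}^{2g+1}$, and whose wobbly locus is a non-empty $\iota$-invariant divisor by the analysis in \cite{DP, P, PP, P1, HR}; since $\mathcal{M}_C^s(L_\omega)$ is projective and $U$ has positive dimension, that divisor meets $U$, and one then has to check that such a point of $U$ is wobbly as a point of $U$ (not merely of $\mathcal{M}$) — using Remark \ref{ER0} (so that $T^*_EU$ is the $\iota$-fixed part of $T^*_E\mathcal{M}$), the $\iota$-equivariance of the Hitchin map, and Proposition \ref{EP0} (so that $h$ maps $T^*U$ into $W(\omega)$), which together let one try to choose the relevant vanishing cotangent vector $\iota$-invariant. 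Making either argument airtight — especially the verification that freeness actually fails — is the crux; granted that $TY$ is not nef, the theorem is then a formal consequence of Theorem \ref{IT2}.
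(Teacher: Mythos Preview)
Your overall strategy matches the paper's exactly: identify $F_{g-2}(X)$ with $U$ (for $\delta=L_\omega$), quote the integrability of $T^*U$ established in Propositions \ref{EP0}--\ref{EP-1}, and then invoke Theorem \ref{IT2} for the endomorphism statement when $g\ge 3$. In fact the paper's own proof is a single sentence doing just this, and it cites the wrong proposition (\ref{EP2} rather than \ref{EP-1}) for the relevant determinant; your version is already more accurate.

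Where you and the paper diverge is in your ``obstacle'' paragraph. You correctly observe that Theorem \ref{IT2} carries the hypothesis that $TY$ is not nef, and you spend the bulk of your proposal outlining two possible routes to verify it. The paper simply does not address this point: it neither checks that $T(F_{g-2}(X))$ fails to be nef nor argues that $F_{g-2}(X)\not\cong\mathbb{P}^{2g-2}$, and it does not even explicitly name Theorem \ref{IT2} in the proof of Theorem \ref{ET-2}. So the gap you flag is genuine relative to what is written. Whether it is a \emph{serious} gap is another matter --- one can appeal to the Campana--Peternell picture (a Fano manifold of Picard number one with nef tangent bundle is expected, and in low dimensions known, to be rational homogeneous, which $F_{g-2}(X)$ is not), or, more in the spirit of the paper, transport the known non-free Hecke curves on $\mathcal{M}_C^s(L_\omega)$ into $U$ via the $\iota$-equivariance you already exploit. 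Either way, your proposed verification is additional care that the paper omits; the paper's proof takes this hypothesis for granted.
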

\begin{proof}
    Note that $F_{g-2}(X)$ is isomorphic to $U$. By Proposition  \ref{EP2}, $T^*U$ is algebraically completely integrable system, which concludes the theorem. 
\end{proof}

\begin{remark}
Note that if $g=2$, the $F_0(X)$ is the intersection of two quadrics in $\mathbb{P}^4$.  Hence, Theorem \ref{ET-2}, gives an alternative proof of the theorem of  Kim and Lee \cite{HY} 
\end{remark}

 {\it Acknowledgement:} I would like to thank to Dr. Suratno Basu and  Prof. A. J Parameswaran for many useful discussions, comments and suggestions. I also would like to thank
 Prof. A. Beauville for pointing out the error in the previous version of the article and for many important discussions, suggestions.


\begin{thebibliography}{111}
 
\bibitem{Be} Beauville, A.: Endomorphisms of hypersurfaces and other manifolds, Intern. Math. Res.
Notices 2001 no. 1, 53--58.
\bibitem{BNR}  Beauville, Arnaud; Narasimhan, M. S.; Ramanan, S. Spectral curves and the generalised theta divisor. J. Reine Angew. Math. 398 (1989), 169–179. 
\bibitem{BV} A. Beauville, A. Etesse, A. Höring, J. Liu, C. Voisin. Symmetric tensors on the intersection of two quadrics and Lagrangian fibration, https://arxiv.org/pdf/2304.10919.pdf
989--1004.
\bibitem{DM} O. Debarre and L. Manivel, Sur la vari \'{e}t \'{e} des espaces lin \'{e}aires contenus dans une intersection compl\'{e}te, Math. Ann. 312 (1998), 549–574.
\bibitem{HM} Hwang, J.-M. and Mok, N.: Finite morphisms onto Fano manifolds of Picard number 1
which have rational curves with trivial normal bundles, J. Alg. Geom. 12 (2003), 627--651.
\bibitem{PS} Paranjape, K. H. and Srinivas, V.: Self maps of homogeneous spaces, Invent. Math. 98
(1989), 425--444.
\bibitem{Kol} Koll\'{a}r, J.: Rational curves on algebraic varieties, Erg. d. Math. 3 Folge 32, Springer
Verlag 1996.
\bibitem{DP} R. Donagi, T. Pantev, Geometric Langlands and non-abelian Hodge theory, Surveys in differential geometry,  Vol.  XIII.  Geometry,  analysis,  and  algebraic  geometry:   forty  years  of  the  Journal  of Differential Geometry, 85–116, International Press, Somerville, MA, 2009.

\bibitem{P} S. Pal, Locus of non-very stable bundles and its geometry, Bulletin des Sciences Mathématiques 141 (2017), 747-765.
\bibitem{PP} Pal, Sarbeswar; Pauly, Christian The wobbly divisors of the moduli space of rank-2 vector bundles. Adv. Geom. 21 (2021), no. 4, 473--482.
\bibitem{P1} Pal, Sarbeswar On a Drinfeld's Conjecture, pre-print, https://arxiv.org/pdf/2202.11874.pdf.
\bibitem{HR}  Hwang, Jun-Muk; Ramanan, S. Hecke curves and Hitchin discriminant. Ann. Sci. \'{E}cole Norm. Sup. (4) 37 (2004), no. 5, 801–817. 
\bibitem{H1} Hwang, Jun-Muk Geometry of minimal rational curves on Fano manifolds. School on Vanishing Theorems and Effective Results in Algebraic Geometry (Trieste, 2000), 335--393, ICTP Lect. Notes, 6, Abdus Salam Int. Cent. Theoret. Phys., Trieste, 2001.
\bibitem{HN}  Hwang, Jun-Muk; Nakayama, Noboru On endomorphisms of Fano manifolds of Picard number one. Pure Appl. Math. Q. 7 (2011), no. 4, Special Issue: In memory of Eckart Viehweg, 1407–1426.
\bibitem{HY}  Hosung Kim, Yongnam Lee Lagrangian fibration structure on the cotangent bundle of a del Pezzo surface of degree 4. https://arxiv.org/pdf/2210.01317.pdf
Ann. 331(2005), no. 4, 925--937.
\bibitem{KP} Kouvidakis, A. and Pantev, T.: The automorphism group of the moduli space of semi-
stable bundles. Math. Annalen 302 (1995) 225-268.
\bibitem{abx} abx (https://mathoverflow.net/users/40297/abx), Smoothness of fix point components of finite group action on smooth variety, URL (version: 2014-06-10): https://mathoverflow.net/q/171529


\bibitem{RR}  Recillas, Sev\'{i}n; Rodr\'{i}guez, Rub\'{i} E. Prym varieties and fourfold covers, https://arxiv.org/pdf/math/0303155.pdf
\bibitem{DR} U.V. Desale, S. Ramanan, Classiﬁcation of vector bundles of rank 2 on hyperelliptic curves, Invent.
Math. 38 (1976/1977) 161–185.
Wiley and Sons Inc., New York, 1994, reprint of the 1978 original.
\bibitem{Nad} Alan Mi hael Nadel. The boundedness of degree of Fano varieties with Picard number
one. J. Amer. Math. So ., 4(4):681-692, 1991.
\bibitem{SZ} Feng Shao, Guolei Zhong. Boundedness of finite morphisms onto Fano manifolds with large Fano index. https://arxiv.org/abs/2211.16380
\bibitem{JD} Jason DeVito (https://math.stackexchange.com/users/331/jason-devito), Tangent space to fixed point manifold and fixed point set of tangent space, URL (version: 2017-10-28): https://math.stackexchange.com/q/2494047
\end{thebibliography}
 \end{document}